\newtheorem{theorem}{Theorem}[section]
\newtheorem{remark}{Remark}[section]
\newtheorem{lemma}[theorem]{Lemma}
\newtheorem*{definition*}{Definition}
\numberwithin{equation}{section}
\def\F{\mathcal{F}}
\def\Fq{\mathbb{F}_q}
\def\A{\mathcal{A}}
\def\B{\mathcal{B}}
\def\E{\mathcal{E}}
\def\RR{\mathcal{R}}
\def \F{{\mathbb F}}
\def \Z{{\mathbb Z}}
\def\PF{\mathbb{PF}}
\def\P{\mathcal{P}}
\def\L{\mathcal{L}}
\def\Q{\mathcal{Q}}
\def\HH{\mathcal{H}}
\def\A{\mathcal{A}}
\def\G{\mathcal{G}}
\def\D{\mathcal{D}}
\def\B{\mathcal{B}}
\def\X{\mathcal{X}}
\def\Y{\mathcal{Y}}
\def\Z{\mathcal{Z}}
\title{Multi-parameter Szemer\'{e}di-Trotter-type theorems and applications in finite fields}
\author{Hung Le\thanks{University of Science, Vietnam National University, Hanoi, Vietnam.
{\sl lequanghung\_t65@hus.edu.vn}.}\and Steven Senger\thanks{Missouri State University, Springfield, Missouri, USA. {\sl stevensenger@missouristate.edu}.}\and Minh-Quan Vo\thanks{University of Science, Vietnam National University, Ho Chi Minh City, Vietnam.
{\sl mqmath0000@gmail.com}.}}
\date{}
\begin{document}

\maketitle

\begin{abstract}
We prove some novel multi-parameter point-line incidence estimates in vector spaces over finite fields. While these could be seen as special cases of higher-dimensional incidence results, they outperform their more general counterparts in those contexts. We go on to present a number of applications to illustrate their use in combinatorial problems from geometry and number theory.
\end{abstract}
\section{Introduction}

There are many problems in mathematics that can be studied using incidence theory. That is, it is often helpful to quantify how often one collection of objects intersects with another. A prototypical result is the following, which bounds the number of incidences, or ordered pairs of the form $(p, \ell),$ where the point $p$ is incident to the line $\ell$, in the plane, originally due to Szemer\' edi and Trotter, in \cite{ST83}.
\begin{theorem}\label{ST}
Let $\P$ be a set of points and let $\L$ be a set of lines, both in $\mathbb R^2$. Then, we have 
\[|\{(p,\ell)\in \P \times \L \colon p\in \ell\}| \lesssim |\P|^\frac{2}{3}|\L|^\frac{2}{3}+|\P|+|\L|.\]
\end{theorem}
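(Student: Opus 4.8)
\emph{Proof strategy.} The plan is to derive Theorem~\ref{ST} from the crossing number inequality of Ajtai--Chv\'atal--Newborn--Szemer\'edi and Leighton: every simple graph on $v$ vertices with $e \ge 4v$ edges has no plane drawing with fewer than $\gtrsim e^3/v^2$ pairwise edge crossings. This is the streamlined argument due to Sz\'ekely, and it localizes the real work into the crossing inequality, which can be proved separately by Euler's formula plus a probabilistic amplification.

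First I would clean up the configuration: discard from $\L$ every line incident to at most one point of $\P$, which removes at most $|\L|$ incidences, so it suffices to bound the incidence count assuming every $\ell \in \L$ carries $k_\ell \ge 2$ points of $\P$. Then I build a graph $G$ drawn in $\R^2$ whose vertex set is $\P$: for each line $\ell$, list the $k_\ell$ points of $\P \cap \ell$ in their order along $\ell$ and join each consecutive pair by the segment between them, so $\ell$ contributes $k_\ell - 1$ edges, each drawn inside $\ell$. Since $k_\ell \ge 2$ we get $e(G) = \sum_\ell (k_\ell - 1) \ge \tfrac12 \sum_\ell k_\ell$, i.e. $e(G)$ is at least half the (reduced) incidence count. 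Since two distinct lines meet in at most one point, an edge lying in $\ell$ and an edge lying in $\ell' \ne \ell$ can cross only at $\ell \cap \ell'$, while two collinear edges never cross; hence $G$ has a drawing with at most $\binom{|\L|}{2} < |\L|^2$ crossings. No multigraph issue arises, because distinct lines share no pair of points and consecutive pairs on a single line are distinct, so $G$ is simple.

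Now I split into cases. If $e(G) < 4|\P|$, then the reduced incidence count is at most $2\,e(G) < 8|\P|$, absorbed by the $|\P|$ term. If $e(G) \ge 4|\P|$, the crossing number inequality gives $|\L|^2 > \mathrm{cr}(G) \gtrsim e(G)^3/|\P|^2$, hence $e(G) \lesssim |\P|^{2/3}|\L|^{2/3}$ and the reduced incidence count is $\lesssim |\P|^{2/3}|\L|^{2/3}$. In either case, adding back the at most $|\L|$ incidences removed at the outset yields $|\{(p,\ell)\in\P\times\L : p\in\ell\}| \lesssim |\P|^{2/3}|\L|^{2/3} + |\P| + |\L|$.

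The one substantive ingredient is the crossing number inequality, which I would include for completeness in the standard two steps: the Euler-formula bound $\mathrm{cr}(H) \ge e(H) - 3v(H)$ valid for every graph $H$, followed by probabilistic amplification — retain each vertex independently with probability $p = 4v/e \le 1$, apply the linear bound to the induced random subgraph, take expectations, and simplify. The only point that genuinely needs care is the reduction guaranteeing $k_\ell \ge 2$: it is precisely what makes $e(G)$ \emph{comparable} to the incidence count rather than merely bounded above by it, which is what the whole argument hinges on.
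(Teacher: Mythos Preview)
The paper does not actually prove Theorem~\ref{ST}; it is stated in the introduction, attributed to Szemer\'edi and Trotter~\cite{ST83}, and serves purely as background motivation for the finite-field incidence bounds that constitute the paper's actual contributions. There is therefore no proof in the paper to compare your proposal against.

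For what it is worth, your argument---Sz\'ekely's crossing-number proof---is correct and is one of the standard modern routes to the theorem. The reduction to lines with $k_\ell \ge 2$, the construction of the drawn graph $G$, the bound on crossings by $\binom{|\L|}{2}$, the simplicity of $G$, and the case split on $e(G) \gtrless 4|\P|$ are all handled correctly, and the sketch of the crossing number inequality (Euler plus random sampling) is the usual one.
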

There are a number of constructions that show this theorem is tight. However, in the context of vector spaces over finite fields, the problem is quite different. Proofs of Szemer\'edi-Trotter rely on the topology of the Euclidean plane, which is not present in the finite field setting. This difference can be appreciated in the work of Bourgain, Katz, and Tao \cite{BKT} and later from Vinh \cite{Vinh11}, and Stevens and de Zeeuw in \cite{SdZ}.

In this note, we consider the incidences between pairs of points and pairs of lines in products of vector spaces over finite fields. This is a type of multi-parameter question, such as those studied recently in \cite{BI, BIO, CMPS}. We prove multi-parameter versions of some of the results mentioned above, then give some example applications to dot product problems (see \cite{CEHIK} for example), as well as some additive combinatorial results.
Note that although these types of estimates can be viewed as special cases of higher-dimensional incidence problems, such as those in \cite{IPST, HTV}, the results here will typically outperform their non-multiparameter counterparts in the settings considered.




\section{Incidence theorems}
We begin by stating a collection of new multi-parameter incidence results over finite fields, each inspired by single-parameter counterparts, but with the complications that come along with the added complexity. We then demonstrate applications of these new results in a variety of settings.

Given two integers $d_1, d_2 \ge 2$, we consider the product space $\F_q^{d_1} \times \F_q^{d_2}$. We use lower-case characters to denote a point in the product space. For example, we write $x = (x_1, x_2) \in \F_q^{d_1} \times \F_q^{d_2}$. A line-pair in $\F_q^{d_1} \times \F_q^{d_2}$ is a pair of two lines of the form $(\ell_1, \ell_2)$, where $\ell_1$ is a line in $\F_q^{d_1}$ and $\ell_2$ is a line in $\F_q^{d_2}$. Consider a set of points $\P$ and a set of line-pairs $\L$, both in $\F_q^{d_1} \times \F_q^{d_2}$. An incidence between $\P$ and $\L$ is a pair consisting of a point $x=(x_1, x_2)$ in $\P$ and a line-pair $\ell=(\ell_1,\ell_2)$ in $\L$ such that $x_1 \in \ell_1$ and $x_2 \in \ell_2$. Then, we also write $x \in \ell$. We denote the number of incidences by $I(\P, \L)$. 

We first present an analogue of the well-known Cauchy-Schwarz point-line incidence bound.

\begin{theorem}
\label{CS bound}
Let $\P$ be a set of points and let $\L$ be a set of line-pairs, both in $\F_q^{d_1} \times \F_q^{d_2}$. Then, the following two estimates hold.

1) $I(\P, \L) \lesssim q^{\frac{1}{2}}|\P|^{\frac{1}{2}}|\L| + |\P|$.

2) $I(\P, \L) \lesssim |\P||\L|^{\frac{1}{2}} + |\L|$.
\end{theorem}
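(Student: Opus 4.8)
The plan is to run the standard double-counting / Cauchy–Schwarz argument for point-line incidences, but adapted to line-pairs. For part (1), I would count "collinear-ish" configurations. Set $N = I(\P,\L)$. For each point $x=(x_1,x_2)\in\P$, let $d(x)$ be the number of line-pairs in $\L$ incident to $x$, so $N = \sum_{x\in\P} d(x)$. Now I count ordered triples $(x, y, \ell)$ with $x,y\in\P$, $\ell=(\ell_1,\ell_2)\in\L$, $x\neq y$, and both $x,y\in\ell$. On one hand this is $\sum_{\ell\in\L}(\text{number of points of }\P\text{ on }\ell)\cdot(\text{same}-1)$; on the other it is $\sum_{x}\binom{d(x)}{2}\cdot 2$-ish after reindexing, but the cleaner route is: the number of such triples is $\sum_{x\neq y} |\{\ell\in\L : x,y\in\ell\}|$. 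The key geometric input is that two distinct points $x=(x_1,x_2)$, $y=(y_1,y_2)$ determine the line-pair through them \emph{unless} one of the coordinate pairs coincides ($x_1=y_1$ or $x_2=y_2$), in which case the offending line is not determined and there can be up to $q+1$ (or $\sim q$) choices for it. So $|\{\ell : x,y\in\ell\}|$ is $O(1)$ when $x_1\neq y_1$ and $x_2\neq y_2$, and $O(q)$ otherwise. Hence
\[
\sum_{x\in\P}\binom{d(x)}{2} \;\lesssim\; |\L| \;+\; q\,|\P|^2,
\]
where the $|\L|$ term absorbs the $O(1)$-per-pair contribution summed over line-pairs (each line-pair has at most... hmm, actually I'd bound $\sum_{x\neq y,\ \text{generic}}|\{\ell:x,y\in\ell\}| \le \sum_{\ell}|\{(x,y): x,y\in\ell, \text{both coords distinct}\}|$, and for a fixed line-pair a point is determined by its two coordinates on $\ell_1,\ell_2$, giving at most $q^2$ pairs — so this is $\lesssim q^2|\L|$, not $|\L|$). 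Let me instead compare against $N$ directly.

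The honest route: $\sum_x \binom{d(x)}{2}\le \sum_{x\ne y}|\{\ell : x,y\in\ell\}|$. Split the inner sum by whether the coordinates of $x,y$ agree. The "generic" part (both coordinates distinct) contributes at most $C$ per pair $(x,y)$ that actually lies on a common line-pair, but more usefully: for each $\ell\in\L$, the number of generic pairs on $\ell$ is at most $|\P\cap\ell|^2$ and... this still loops back. The clean finish is Cauchy–Schwarz in the other direction: $N^2 = (\sum_x d(x))^2 \le |\P|\sum_x d(x)^2 \le |\P|(\,2\sum_x\binom{d(x)}{2} + N\,)$. So I need a bound on $\sum_x\binom{d(x)}{2}$. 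Bounding the generic pairs: each unordered generic pair $\{x,y\}$ lies on at most $O(1)$ common line-pairs (both coordinate-lines forced), so the generic contribution to $\sum_{x\ne y}$ is $\lesssim |\P|^2$; hmm but we want $q|\P|^2$-type or a term matching the theorem. Actually for the \emph{generic} pair, $\ell_1$ is the unique line through $x_1,y_1$ in $\F_q^{d_1}$ and $\ell_2$ the unique line through $x_2,y_2$, so there is at most \emph{one} such line-pair — contribution $\lesssim |\P|^2$. The non-generic pairs: $x_1=y_1$ (then $\ell_1$ arbitrary through $x_1$, $\ell_2$ forced) — for such pairs to contribute we need $\ell=(\ell_1,\ell_2)\in\L$; bound this by $\sum_{\ell\in\L}|\{(x,y)\in\P^2 : x\ne y, x,y\in\ell, x_1=y_1\}|$. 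On a fixed $\ell$, $x_1=y_1$ forces $x_1=y_1$ to be one of $\le q$ points of $\ell_1$, and then $x_2,y_2$ range over $\le q$ points of $\ell_2$ each; this is $O(q^3)$ per line-pair, too lossy.

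So this naive split is too wasteful, and \textbf{the main obstacle is exactly this}: controlling pairs of points that share a coordinate. I expect the right fix is to not separate $\sum_x\binom{d(x)}{2}$ crudely but to use the structure more carefully — e.g. bound $\sum_x d(x)^2$ by counting triples $(x,\ell,\ell')$ with $x\in\ell\cap\ell'$, and split according to whether $\ell_1=\ell_1'$, $\ell_2=\ell_2'$, or neither; two distinct line-pairs with $\ell_1\ne\ell_1'$ and $\ell_2\ne\ell_2'$ share at most one point, two with (say) $\ell_1=\ell_1'$ share at most the points with $x_2\in\ell_2\cap\ell_2'$ i.e. at most one point again (since $\ell_2\ne\ell_2'$), so any two distinct line-pairs share $O(1)$ points! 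Hence $\sum_x\binom{d(x)}{2}\le \binom{|\L|}{2}\cdot O(1)\lesssim |\L|^2$, giving $N^2\lesssim |\P|(|\L|^2+N)$, hence $N\lesssim |\P|^{1/2}|\L|+|\P|$ — but the theorem wants $q^{1/2}|\P|^{1/2}|\L|$, which is \emph{weaker}, so something is off in my favor or the $q^{1/2}$ comes from a genuinely different (dual) count. I suspect part (1) actually comes from dualizing part (2), or from a count where the roles are reversed and the $q^{1/2}$ enters because a point can lie on $\sim q^2$ line-pairs through it in a structured way. Concretely, for part (2) I would Cauchy–Schwarz over line-pairs: let $d(\ell)=|\P\cap\ell|$, $N=\sum_\ell d(\ell)$, and count triples $(x,y,\ell)$ with $x,y\in\P\cap\ell$; since two points lie on $O(1)$ common line-pairs generically but up to $\sim q$ when sharing a coordinate, one gets $\sum_\ell d(\ell)^2 \lesssim |\P|^2 + (\text{shared-coordinate terms}) + N$, and then $N^2\le |\L|\sum_\ell d(\ell)^2$ yields $N\lesssim |\L||\P|^{1/2}$-ish — I'd then need to check the shared-coordinate terms give exactly $|\P||\L|^{1/2}$ after balancing, and symmetrically the $q^{1/2}|\P|^{1/2}|\L|$ in part (1). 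I'd present part (2) first as the cleaner one and derive or mirror part (1) from the analogous dual count, and I'd expect the careful bookkeeping of the shared-coordinate degenerate pairs — and verifying the stated exponents come out rather than something stronger or weaker — to be where essentially all the work lies.
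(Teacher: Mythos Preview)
Your approach for part (1) is exactly the paper's: apply Cauchy--Schwarz over points to get $I(\P,\L)^2 \le |\P|\sum_{x\in\P}d(x)^2$, then bound $\sum_x d(x)^2$ by counting triples $(x,\ell,\ell')$ with $x\in\ell\cap\ell'$. But your key claim that two distinct line-pairs share $O(1)$ common points is wrong, and this is precisely the source of your confusion about the ``too strong'' bound. If $\ell_1=\ell_1'$ and $\ell_2\ne\ell_2'$, then a common point $(x_1,x_2)$ has $x_2\in\ell_2\cap\ell_2'$ (at most one choice), but $x_1$ can be \emph{any} of the $q$ points on $\ell_1=\ell_1'$; you forgot that $x_1$ is free. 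So two distinct line-pairs share at most $q$ common points, not $O(1)$. This gives
\[
\sum_{x\in\P} d(x)^2 \le I(\P,\L) + q\binom{|\L|}{2},
\]
and hence $I(\P,\L)\lesssim q^{1/2}|\P|^{1/2}|\L| + |\P|$. The $q^{1/2}$ is not an artifact of dualizing or of shared-coordinate point pairs; it comes straight from this line-pair intersection count. The paper even remarks on this explicitly after the proof.

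For part (2) the paper runs the symmetric argument (Cauchy--Schwarz over line-pairs, then bound $\sum_{\ell}|\P_\ell|^2$ via triples $(x,x',\ell)$), simply asserting that any two distinct points of $\F_q^{d_1}\times\F_q^{d_2}$ lie on exactly one common line-pair, which yields $\sum_{\ell}|\P_\ell|^2\le I(\P,\L)+\binom{|\P|}{2}$ and the stated bound. Your worry about the degenerate case --- pairs $x,y$ with $x_1=y_1$, where $\ell_1$ is undetermined --- is not addressed in the paper; it just asserts uniqueness and moves on. So on that point you were being more careful than the paper's own proof, but for the purpose of matching the paper's argument you can take the uniqueness claim as given.
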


While this bound is standard, it can often be improved. The following result is an analogue of the Szemer\'{e}di-Trotter theorem over finite fields in the spirit of the results of Vinh in \cite{Vinh11}.
 
\begin{theorem}\label{multi-incidence}
Let $\P$ be a set of points and $\L$ be a set of line-pairs, both in $\F_q^2 \times \F_q^2$. Then, 
\[
\left| I(\P, \L) - \frac{|\P||\L|}{q^2} \right| \lesssim q^{\frac{3}{2}}\sqrt{|\P||\L|}.
\]
In addition, if $\P$ and $\L$ are multi-sets, then we have 
\[
\left| I(\P, \L)-\frac{|\P||\L|}{q^2} \right|\le q^{\frac{3}{2}}\left(\sum_{{p}\in \overline{\P}}m({p})^2\right)^{\frac{1}{2}}\left(\sum_{\ell\in \overline{\L}}m(\ell)^2\right)^{\frac{1}{2}}.
\]
Here, $\overline{X}$ is the set of distinct elements in $X$ and $|X|=\sum_{x\in \overline{X}}m(x)$.
\end{theorem}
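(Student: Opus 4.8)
The plan is to recast the problem as a bipartite-graph incidence count and apply a weighted expander mixing lemma, following the strategy Vinh used for the finite field Szemer\'edi--Trotter bound. Let $A\in\{0,1\}^{q^2\times(q^2+q)}$ be the point--line incidence matrix of the affine plane $\F_q^2$, so $A_{p,\ell}=1$ iff $p\in\ell$. Since a point $x=(x_1,x_2)$ is incident to a line-pair $\ell=(\ell_1,\ell_2)$ precisely when $x_1\in\ell_1$ and $x_2\in\ell_2$, the incidence matrix $B$ of points versus line-pairs in $\F_q^2\times\F_q^2$ is, after ordering rows and columns lexicographically, the Kronecker product $B=A\otimes A\in\{0,1\}^{q^4\times(q^2+q)^2}$. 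The associated bipartite graph is biregular: every point meets $(q+1)^2$ line-pairs and every line-pair contains $q^2$ points. Writing $m_\P,m_\L$ for the multiplicity (indicator) vectors of $\P$ and $\L$, we have $I(\P,\L)=m_\P^{\top}B\,m_\L$, and the ``random'' prediction for this quantity is $\frac{(q+1)^2}{(q^2+q)^2}|\P||\L|=\frac{|\P||\L|}{q^2}$, which is the main term to be isolated.

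The next step is to pin down the singular values of $B$. Because two distinct points of $\F_q^2$ determine a unique line and each point lies on $q+1$ lines, $AA^{\top}=qI+J$ on $\R^{q^2}$; its eigenvalues are $q^2+q$ on the all-ones vector and $q$ with multiplicity $q^2-1$, so the nonzero singular values of $A$ are $\sqrt{q^2+q}$ (once) and $\sqrt{q}$ (together with $q$ trivial zero singular values coming from the rectangular shape). Hence $BB^{\top}=(AA^{\top})\otimes(AA^{\top})=(qI+J)\otimes(qI+J)$, whose eigenvalues are the pairwise products $(q^2+q)^2$ (once), $(q^2+q)q=q^3+q^2$, and $q^2$. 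Thus the largest singular value of $B$ equals $q^2+q$, attained only on the direction $\mathbf 1\otimes\mathbf 1$, and the second-largest singular value of $B$ is $\sqrt{q^3+q^2}$, which is of order $q^{3/2}$.

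Finally I would carry out the expander-mixing computation. Decompose $m_\P=\frac{|\P|}{q^4}\mathbf 1+f$ and $m_\L=\frac{|\L|}{(q^2+q)^2}\mathbf 1+g$, with $f$ and $g$ orthogonal to the respective all-ones vectors. Expanding $m_\P^{\top}Bm_\L$, the two cross terms pairing an all-ones vector with an orthogonal vector vanish because $B\mathbf 1=(q+1)^2\mathbf 1$ and $B^{\top}\mathbf 1=q^2\mathbf 1$; the all-ones--against--all-ones term is exactly $\frac{|\P||\L|}{q^2}$ by the count above; and the surviving term $f^{\top}Bg$ obeys, via Cauchy--Schwarz and the spectral bound, $|f^{\top}Bg|\le\|f\|\,\|Bg\|\le\sqrt{q^3+q^2}\,\|f\|\,\|g\|$. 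Since $\|f\|^2=\|m_\P\|^2-|\P|^2/q^4\le\sum_{p\in\overline\P}m(p)^2$ and likewise $\|g\|^2\le\sum_{\ell\in\overline\L}m(\ell)^2$, this yields the weighted estimate with constant $\sqrt{q^3+q^2}$ (which is $\le\sqrt2\,q^{3/2}$, so in particular the stated $q^{3/2}$-type bound); restricting to genuine sets, where $\|m_\P\|^2=|\P|$ and $\|m_\L\|^2=|\L|$, gives $\big|I(\P,\L)-|\P||\L|/q^2\big|\lesssim q^{3/2}\sqrt{|\P||\L|}$. I expect the only genuinely delicate step to be the spectral one: correctly handling the rectangular shape of $A$ and confirming that, once the top singular value $q^2+q$ is removed, no singular value of $B$ exceeds $\sqrt{q^3+q^2}$; after that the argument is routine bookkeeping.
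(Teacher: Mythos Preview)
Your proof is correct and arguably cleaner than the paper's, but it proceeds along a genuinely different route. The paper embeds $\F_q^2\times\F_q^2$ into $\PF_q^2\times\PF_q^2$ and works with the (non-bipartite) Cayley-type graph $G_q^{2,2}$ on $(q^2+q+1)^2$ vertices, where an edge records the vanishing of both inner products; it then bounds the second eigenvalue by writing $A^2=J+((q+1)^2-1)I+qE$ for an auxiliary $2q(q+1)$-regular graph $E$ and applying the standard (and, for multi-sets, the $L^2$) expander mixing lemma. You stay in the affine setting, view the point/line-pair incidence matrix as the Kronecker square $B=A\otimes A$ of the $q^2\times(q^2+q)$ affine incidence matrix, and read off the singular values of $B$ directly from $AA^\top=qI+J$. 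What your approach buys is an exact main term $|\P||\L|/q^2$ (the paper only gets $\frac{(q+1)^2}{(q^2+q+1)^2}|\P||\L|$, which it then approximates) and a sharper error constant $\sqrt{q^3+q^2}$ versus the paper's $\sqrt{2q^3+3q^2+2q}$; both are $O(q^{3/2})$, so neither argument literally attains the constant $q^{3/2}$ written in the multi-set statement, but yours comes closer. The paper's projective approach, on the other hand, sits inside a single $(n,d,\lambda)$-graph and generalises mechanically to the hyperplane-pair setting of Theorem~\ref{multi-point-hyperplane}, where your tensor argument would require analysing $A^\top A$ for the rectangular point--hyperplane matrix in each dimension (still doable, but the projective self-duality is lost).
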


By generalizing the proof of Theorem \ref{multi-incidence} we can obtain the following incidence estimate in $\F_q^{d_1} \times \F_q^{d_2}$.

\begin{theorem}\label{multi-point-hyperplane}
Let $\P$ be a set of points and $\HH$ be a set of hyperplane-pairs, both in $\F_q^{d_1} \times \F_q^{d_2}$ with $2 \le d_1\le d_2$. Then, we have 
\[
\left\vert I(\P, \HH)-\frac{|\P||\HH|}{q^2} \right\vert\le q^{\frac{d_1+2d_2-3}{2}} \sqrt{|\P||\HH|}.
\]
\end{theorem}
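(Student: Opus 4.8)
The plan is to run the spectral (expander-mixing) argument behind Theorem~\ref{multi-incidence}, now over the product $\F_q^{d_1}\times\F_q^{d_2}$, exploiting that the relevant incidence structure there is a tensor product of two single-factor structures, so that only the spectrum of one factor needs to be understood.

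In each factor $\F_q^{d_i}$ I would build the bipartite graph $G_i$ whose two vertex sets are $\F_q^{d_i}$ itself and the set of hyperplanes written in ``slope-intercept'' form $x_{d_i}=a_1x_1+\cdots+a_{d_i-1}x_{d_i-1}+b$ (which I also identify with $\F_q^{d_i}$, via $(a_1,\dots,a_{d_i-1},b)$), joining a point to a hyperplane whenever the point lies on it. This graph is biregular: each point lies on exactly $q^{d_i-1}$ such hyperplanes and each hyperplane contains exactly $q^{d_i-1}$ points, so its edge density is exactly $q^{-1}$. Writing $B_i$ for the $q^{d_i}\times q^{d_i}$ biadjacency matrix, a short double count of the number of slope-intercept hyperplanes through one given point, respectively through two given points, yields $B_iB_i^T=q^{d_i-1}I+q^{d_i-2}(J-P)$, where $J$ is all-ones and $P_{x,x'}=1$ exactly when $x$ and $x'$ agree in their first $d_i-1$ coordinates. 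Since $J$ and $P$ commute, this is immediately diagonalized, and the singular values of $B_i$ are $q^{d_i-1}$ (simple, with the all-ones vector as its left and right singular vector), $q^{(d_i-1)/2}$ (with large multiplicity), and $0$; in particular $\sigma_1(B_i)=q^{d_i-1}$ and $\sigma_2(B_i)=q^{(d_i-1)/2}$.

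Next I would note that the point / hyperplane-pair incidence graph on $\F_q^{d_1}\times\F_q^{d_2}$ has biadjacency matrix $B=B_1\otimes B_2$, so its singular values are exactly the products $\sigma_a(B_1)\sigma_b(B_2)$, and the top one, $q^{d_1+d_2-2}$, is simple with the all-ones vector (the pure tensor of the two factor singular vectors) as its left and right singular vector. Decomposing the indicators of $\P$ and of $\HH$ into their averages plus mean-zero parts $\mathbf{v},\mathbf{w}$ and expanding $I(\P,\HH)=\mathbf{1}_\P^T B\,\mathbf{1}_\HH$, the cross terms vanish, the main term is exactly $(\text{edge density})\cdot|\P|\,|\HH|=|\P||\HH|/q^2$, and the remaining term $\mathbf{v}^T B\,\mathbf{w}$ is at most $\sigma_2(B)\|\mathbf{v}\|\,\|\mathbf{w}\|\le\sigma_2(B)\sqrt{|\P|\,|\HH|}$. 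Finally $\sigma_2(B)=\max\{\sigma_1(B_1)\sigma_2(B_2),\ \sigma_2(B_1)\sigma_1(B_2)\}$, and since $d_1\le d_2$ we have $q^{d_1-1}q^{(d_2-1)/2}\le q^{(d_1-1)/2}q^{d_2-1}=q^{(d_1+2d_2-3)/2}$, which is exactly the claimed bound. Hyperplane-pairs in which some $h_i$ is parallel to the distinguished coordinate direction are absorbed by permuting coordinates in that factor and treating the finitely many ``direction types'' separately.

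The step I expect to be the real obstacle is the computation of $\sigma_2(B_1\otimes B_2)$: one must (i) isolate the trivial singular value so that the deviation is honestly controlled by the \emph{sub-leading} one, and (ii) recognize that this sub-leading value is a \emph{mixed} product --- the top singular value of one factor times the second largest of the other --- so that the hypothesis $d_1\le d_2$ is precisely what decides which mixed product dominates, and hence produces the asymmetric exponent $(d_1+2d_2-3)/2$. A secondary subtlety worth flagging is that the clean constant $1$ forces one to pin down the \emph{exact} single-factor spectrum (not just bounds up to constants), which is why the slope-intercept model --- for which $\sigma_1(B_i)$ equals $q^{d_i-1}$ on the nose --- is the right structure on which to run the argument.
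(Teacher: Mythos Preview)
Your argument is correct, and it follows a genuinely different (and in several respects cleaner) route than the paper's. The paper works projectively: it builds the single Cayley-type graph $G_q^{d_1,d_2}$ on $\PF_q^{d_1}\times\PF_q^{d_2}$, does a four-way case analysis of the common-neighbor count $N([x],[y])$, and bounds the second eigenvalue by estimating $A^2$ against the resulting combination of $J$, $I$, and auxiliary adjacency matrices. You instead stay affine, take the bipartite point/slope-intercept-hyperplane graph in each factor, and then use that the product incidence matrix is literally $B_1\otimes B_2$; the exact formula $B_iB_i^T=q^{d_i-1}I+q^{d_i-2}(J-P)$ gives the singular values $q^{d_i-1}$ and $q^{(d_i-1)/2}$ on the nose, so $\sigma_2(B)$ is the larger of the two mixed products and the hypothesis $d_1\le d_2$ transparently selects $q^{(d_1+2d_2-3)/2}$. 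What your approach buys is a conceptual explanation for the asymmetric exponent and the avoidance of the case-by-case eigenvalue bookkeeping; what the paper's projective model buys is that all hyperplanes are treated at once, so no separate handling of ``vertical'' hyperplane-pairs is needed. Your coordinate-permutation patch for those directions is fine but introduces a constant factor (roughly $d_1d_2$) in front of the error term; since the paper's own sketch also concludes with $|\lambda|=O(q^{(d_1+2d_2-3)/2})$ rather than an exact equality, this is not a real discrepancy. Finally, the step you flagged as the ``real obstacle'' --- identifying $\sigma_2$ of a tensor product --- is in fact the easy part once the single-factor spectra are known; the substantive work in your proof is the exact diagonalization of $B_iB_i^T$, which you have carried out correctly.
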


By applying Theorem \ref{multi-point-hyperplane} in the case when $d_1=d_2=3$, we recover the following bound for $I(\P, \L)$ under some special conditions.




\begin{theorem}\label{AtimesB,AL_small}
Let $\A$ and $\B$ be any two sets of points in $\F_q^2$ with $|\A| \lesssim |\B|$. Let $\L$ be a set of line-pairs in $\F_q^2 \times \F_q^2$ with only non-vertical components. Then, we have 
\[
I(\A \times \B, \L) \lesssim \frac{|\A||\B|^{\frac{1}{2}}|\L|}{q} + q^{\frac{3}{2}}\sqrt{|\A||\B||\L|}.
\]
Furthermore, if $|\A||\L| \lesssim q^{5}$, then 
\[
I(\A \times \B, \L) \lesssim q^{\frac{3}{2}}\sqrt{|\A||\B||\L|}.
\]
\end{theorem}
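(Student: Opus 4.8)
The plan is to deduce this from Theorem~\ref{multi-point-hyperplane} applied in the case $d_1 = d_2 = 3$, after lifting the configuration from $\F_q^2 \times \F_q^2$ into $\F_q^3 \times \F_q^3$. The standard device is the following: a non-vertical line $\ell_i$ in $\F_q^2$, say with equation $y = a_i x + b_i$, corresponds under the map $(x, y) \mapsto (x, y, 1)$ to a point lying on a plane in $\F_q^3$ whose coefficients are determined by $(a_i, b_i)$; dually, a point $p_i = (u_i, v_i) \in \F_q^2$ lifts to a plane in $\F_q^3$ (or we can arrange it so that the point $(a_i, b_i, \cdot)$ plays the role of a point and the incidence $v_i = a_i u_i + b_i$ becomes a point-plane incidence). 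Doing this in each of the two factors turns $\A \times \B$ and $\L$ into a point set and a hyperplane-pair set in $\F_q^3 \times \F_q^3$, with $I(\A \times \B, \L)$ equal (up to the obvious bookkeeping) to $I(\P', \HH')$ there, where $|\P'| = |\A||\B|$ and $|\HH'| = |\L|$.

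First I would set up this lift carefully and check that the incidence counts match. Then, applying Theorem~\ref{multi-point-hyperplane} with $d_1 = d_2 = 3$ gives
\[
\left| I(\A \times \B, \L) - \frac{|\A||\B||\L|}{q^2} \right| \le q^{\frac{3 + 6 - 3}{2}} \sqrt{|\A||\B||\L|} = q^3 \sqrt{|\A||\B||\L|},
\]
which by itself is too weak. The improvement must come from exploiting the product structure $\A \times \B$: rather than lifting both coordinates symmetrically, I would freeze the first coordinate. For each fixed $a \in \A$, consider the slice of line-pairs and count incidences of $\{a\} \times \B$ with $\L$; summing over $a \in \A$ and applying a Cauchy--Schwarz / second-moment argument over the first factor (as in the multi-set version of Theorem~\ref{multi-incidence}) should let us replace one factor of $q$ in the error term by something governed by $|\A|$. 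Concretely, the target $q^{3/4}\sqrt{|\A||\B||\L|}$ together with the main term $|\A||\B|^{1/2}|\L|/q$ strongly suggests: apply the Cauchy--Schwarz bound of Theorem~\ref{CS bound} in the $\B$-factor to reduce to counting, for pairs of points in $\B$, the line-pairs through them, and then apply Theorem~\ref{multi-incidence} (or \ref{multi-point-hyperplane} with $d_1 = 2$) in the remaining $\A \times (\text{line-slice})$ configuration, whose balanced error term is of order $q^{3/2}$ — and the square root from Cauchy--Schwarz halves the relevant exponent appropriately to land at $q^{3/4}$.

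The main obstacle will be organizing the double application so that the main terms combine correctly: one has to track how the "expected count" $|\P||\L|/q^2$ behaves under fixing the first coordinate and under the Cauchy--Schwarz step, making sure the diagonal contribution (pairs $(b, b)$ with $b \in \B$, and degenerate line-pairs) is absorbed into the stated bound rather than producing a spurious larger term. Once the two-term bound
\[
I(\A \times \B, \L) \lesssim \frac{|\A||\B|^{1/2}|\L|}{q} + q^{3/4}\sqrt{|\A||\B||\L|}
\]
is established, the final claim is immediate: the hypothesis $|\A||\L| \lesssim q^{7/2}$ is exactly the condition under which the second term dominates the first, since
\[
\frac{|\A||\B|^{1/2}|\L|}{q} \lesssim q^{3/4}\sqrt{|\A||\B||\L|}
\iff |\A||\L| \lesssim q^{7/2},
\]
after squaring and cancelling the common factor $|\B|^{1/2}|\A||\L|$.
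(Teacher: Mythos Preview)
Your overall instinct is right: one separates $|\B|^{1/2}$ by Cauchy--Schwarz and then controls the remaining ``energy'' via a point--hyperplane-pair incidence in $\F_q^3\times\F_q^3$. But the mechanism you sketch misidentifies both what the energy is and which incidence theorem to apply. After writing each non-vertical line-pair as $(Y=s_1X+t_1,\,Y=s_2X+t_2)$ and defining $\varphi\colon\A\times\L\to\F_q^2$ by $\varphi\bigl((x_1,x_2),(s_1,t_1,s_2,t_2)\bigr)=(s_1x_1+t_1,\,s_2x_2+t_2)$, one has $I(\A\times\B,\L)=|\{(u,y)\in(\A\times\L)\times\B:\varphi(u)=y\}|$, and Lemma~\ref{lemma7-SdZ} yields $I\le|\B|^{1/2}|\E|^{1/2}$ with
\[
\E=\{(u,u')\in(\A\times\L)^2:\varphi(u)=\varphi(u')\}.
\]
So the energy lives over $\A\times\L$, not over pairs of points in $\B$ as you suggest. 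The relations $s_ix_i+t_i=s_i'x_i'+t_i'$ defining $\E$ are then read as incidences between a point set $\Q=\{((x_1,s_1',t_1'),(x_2,s_2',t_2'))\}$ and a plane-pair set $\RR$ in $\F_q^3\times\F_q^3$, each of size $|\A||\L|$, and one applies Theorem~\ref{multi-point-hyperplane} with $d_1=d_2=3$. Your proposal to use Theorem~\ref{multi-incidence} or the case $d_1=2$ cannot work here: the bilinear constraint $s_ix_i+t_i=s_i'x_i'+t_i'$ genuinely needs three free variables in each factor to be encoded as a hyperplane incidence, and freezing a coordinate in $\A$ does not produce such a structure. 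Your closing verification that $|\A||\L|\lesssim q^{7/2}$ is exactly the condition making the second term dominate is correct.
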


We conclude our main results with one more estimate, inspired by the work of Stevens and de Zeeuw in \cite{SdZ}. However, in our setting, one cannot use the robust incidence bounds of Rudnev from \cite{Rudnev}, so the proof of our result proceeds differently from there.

\begin{theorem}\label{sophieFrank}
Let $\P$ be set of $m$ point-pairs and $\L$ be set of $n$ line-pairs, both in $\Fq^2 \times \Fq^2$, such that $q^3 > n > C^{-1} q$, $n^4 \ge C' mq^3$, then 
\[
I(\P, \L) \le C n^2 \sqrt{m/q}.
\]
\end{theorem}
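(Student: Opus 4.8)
The plan is to deduce this estimate directly from the Cauchy--Schwarz incidence bound in Theorem \ref{CS bound}, rather than through a Rudnev-type point--plane reduction: in our setting an incidence between a point-pair and a line-pair is cut out by two coupled bilinear constraints rather than one, so there is no single point--plane incidence problem to hand to Rudnev's theorem. Concretely, part 1) of Theorem \ref{CS bound}, applied to $\P$ and $\L$, gives
\[
I(\P,\L) \;\lesssim\; q^{\frac12}|\P|^{\frac12}|\L| + |\P| \;=\; q^{\frac12}\sqrt{m}\,n + m ,
\]
and it remains only to check that, under the hypotheses, each of the two terms on the right is $O\big(n^2\sqrt{m/q}\big)$.

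For the first term one has $q^{\frac12}\sqrt{m}\,n \le n^2\sqrt{m/q}$ if and only if $q \le n$, so the hypothesis $n > C^{-1}q$ yields $q^{\frac12}\sqrt m\, n \le C\, n^2\sqrt{m/q}$. For the second term, $m \le n^2\sqrt{m/q}$ if and only if $mq \le n^4$, which holds a fortiori given $n^4 \ge C' m q^3$. Summing, $I(\P,\L) \lesssim n^2\sqrt{m/q}$, which is the assertion after renaming constants. The hypothesis $n < q^3$ is not used in this argument; it serves only to keep the bound in the regime where it improves on trivial estimates and on Theorem \ref{multi-incidence}.

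I do not anticipate a genuine obstacle on this route. The one point to highlight is that the elementary bound in Theorem \ref{CS bound} is already sharp enough once $n \gtrsim q$, so that none of the Szemer\'edi--Trotter refinements used in the arbitrary-field analogue of \cite{SdZ} --- which go through Rudnev's point--plane theorem --- are needed here. If one wanted a more self-contained argument in the spirit of \cite{SdZ}, the natural alternative is a slope-pair decomposition of $\L$: for each fixed pair of non-vertical slopes $(a_1,a_2)$, every point-pair $\big((u_1,v_1),(u_2,v_2)\big)$ lies on exactly one line-pair of those slopes, namely the one with intercept-pair $(v_1-a_1u_1,\, v_2-a_2u_2)$, so inside each slope class the incidences reduce to a point-in-set count in $\F_q^2$; after Cauchy--Schwarz across the slope classes one is left to bound the collision sum $\sum_{(a_1,a_2)}\big|\{(p,p')\in\P^2 : \phi_{(a_1,a_2)}(p)=\phi_{(a_1,a_2)}(p')\}\big|$, where $\phi_{(a_1,a_2)}$ is the intercept-pair map. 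In that variant the actual work lies in controlling the degenerate collisions coming from point-pairs that share a first or a second component --- exactly the contributions that the direct appeal to Theorem \ref{CS bound} allows us to sidestep.
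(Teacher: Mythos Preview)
Your argument is correct, and it is genuinely different from --- and far shorter than --- the paper's route. The paper proceeds by an induction on $m$ in the style of Stevens--de Zeeuw: it first proves a two-parameter covering lemma (Lemma \ref{lem: 3.6}) producing, inside any point set with near-uniform line-pair multiplicity, a large subset $\G$ simultaneously covered by $\lesssim K$ line-pairs through two fixed anchor point-pairs $a,b$; it then iterates this to decompose most of $\P$ into pieces $\G^i$, bounds each $I(\G^i,\L)$ via the spectral estimate Theorem \ref{multi-incidence}, and closes the induction. Your proof bypasses all of this by observing that under the standing hypothesis $n \gtrsim q$ the Cauchy--Schwarz bound $I \lesssim q^{1/2}m^{1/2}n + m$ of Theorem \ref{CS bound} already matches the target $n^2\sqrt{m/q}$. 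What this comparison reveals is that, as stated, Theorem \ref{sophieFrank} does not actually improve on Theorem \ref{CS bound} in its claimed range; the heavier Stevens--de Zeeuw machinery the paper develops would only pay off in a regime where $n$ is allowed to be genuinely smaller than $q$, which the current hypotheses exclude.

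One bookkeeping remark: because the same constant $C$ appears in the hypothesis $n > C^{-1}q$ and in the conclusion $I \le C n^2\sqrt{m/q}$, your line ``after renaming constants'' deserves a sentence of care. Concretely, Theorem \ref{CS bound} gives $I \le q^{1/2}m^{1/2}n + 2m$ with explicit constants, whence $I/(n^2\sqrt{m/q}) \le q/n + 2\sqrt{mq}/n^2 < C + 2/(q\sqrt{C'})$; so one should either decouple the hypothesis and conclusion constants, or note that the second summand is $<1$ once $q\sqrt{C'}>2$ and absorb it. This is a triviality, not a gap.
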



\section{Proofs of the main results}
\subsection{Proof of Theorem \ref{CS bound}}

We begin by proving part 1). By the Cauchy-Schwarz inequality we first have
\begin{equation}\label{CS1.1}
I(\P, \L) = \sum_{{u} \in \P} |\L_{{u}}| \le |\P|^{\frac{1}{2}} \left(\sum_{{u} \in \P} |\L_{{u}}|^2\right)^{\frac{1}{2}},    
\end{equation}
where $\L_{{u}}$ denotes the subset of $\L$ consisting of all line-pairs incident to ${u}$. 
Further, we have
\begin{align*}
\sum_{{u} \in \P} |\L_{{u}}|^2 
&= \sum_{{u} \in \P} \sum_{\ell \in \L} \sum_{\ell' \in \L} 1_{{u} \in \ell}({u}, \ell) \cdot 1_{{u} \in \ell'}({u}, \ell') \\
&= \sum_{{u} \in \P} \sum_{\ell \in \L} 1_{{u} \in \ell}({u}, \ell) + \sum_{\ell \in \L} \sum_{\substack{\ell' \in \L\\\ell'\neq \ell}} \sum_{{u} \in \P}  1_{{u} \in \ell}({u}, \ell) \cdot 1_{{u} \in \ell'}({u}, \ell') \\
&= I(\P, \L) + \sum_{\ell \in \L} \sum_{\substack{\ell' \in \L\\\ell'\neq \ell}} \sum_{{u} \in \P}  1_{{u} \in \ell}({u}, \ell) \cdot 1_{{u} \in \ell'}({u}, \ell').
\end{align*}
Now, because any two line-pairs are incident to at most $q$ common points, we are guaranteed that  
\begin{equation}\label{CS1.2}
\sum_{{u} \in \P} |\L_{{u}}|^2 \le I(\P, \L) + q \cdot \binom{|\L|}{2}.
\end{equation}
Combining \eqref{CS1.1} and \eqref{CS1.2} then yields
\[
\frac{I(\P, \L)^2}{|\P|} \le I(\P, \L) +  q \cdot \binom{|\L|}{2}.
\]
If $I(\P, \L) \ge q \cdot \binom{|\L|}{2}$, then the estimate above leads to $I(\P, \L) \lesssim |\P|$. Otherwise, it leads to 
\[
I(\P, \L) \lesssim q^{\frac{1}{2}} |\P|^{\frac{1}{2}} |\L|.
\]
We therefore conclude that
\[
I(\P, \L) \lesssim q^{\frac{1}{2}}|\P||\L|^{\frac{1}{2}} + |\P|.
\]

To prove the second bound, 2), we first note that 
\begin{equation}\label{CS2.1}
I(\P, \L) = \sum_{\ell \in \L} |\P_{\ell}| \le |\L|^{\frac{1}{2}} \left(\sum_{\ell \in \L} |\P_{\ell}|^2\right)^{\frac{1}{2}},
\end{equation}
where $\P_{\ell}$ denotes the subset of $\P$ containing all points incident to $\ell$. As there is exactly one line-pair incident to any two points in $\F_q^{d_1} \times \F_q^{d_2}$, by proceeding similarly, we get
\begin{equation}\label{CS2.2}
\sum_{\ell \in \L} |\P_{\ell}|^2 \le I(\P, \L) + \binom{|\P|}{2}.
\end{equation}
By combining \eqref{CS2.1} and \eqref{CS2.2}, we obtain that
\[
\frac{I(\P, \L)^2}{|\L|} \le I(\P, \L) + \binom{|\P|}{2}.
\]
This means either $I(\P, \L) \lesssim |\P||\L|^{\frac{1}{2}}$ or $I(\P,\L) \lesssim |\L|$, and thus we conclude that
\[
I(\P, \L) \lesssim |\P||\L|^{\frac{1}{2}} + |\L|.
\]

\begin{remark}
    Notice that in \eqref{CS1.2}, we are using the fact that two line-pairs can share at most $q$ points. However, if one knows more about the how elements of the family of line-pairs under consideration intersect, this estimate can be tightened.
\end{remark}

\subsection{Proofs of Theorem \ref{multi-incidence} and \ref{multi-point-hyperplane}}
In this section, we will remind the reader of the expander mixing lemma, a fundamental result in the study of pseudorandom graphs. To see other examples of the expander mixing lemma for this general type of proof, we refer the readers to \cite{HTV, TV2, Vinh11}, and the references contained therein. Then we will give the full proof of the main case of Theorem \ref{multi-incidence}, and explain how to modify it to prove the multi-set case. The proof of Theorem \ref{multi-point-hyperplane} is essentially identical, but with more complicated eigenvalue calculations. To ease exposition, we merely highlight some of the key steps in the latter proof, and omit many of the routine yet tedious details. 
\subsubsection{Expander mixing lemma}
To prove Theorem \ref{multi-incidence}, we will appeal to the celebrated expander mixing lemma, as well as an $L^2$ variant of it. These are used on so-called $(n,d,\lambda)$-graphs. A graph $G$ is called an $(n, d, \lambda)$-graph if it has $n$ vertices, is $d$-regular, and has the property that if you order the absolute values of the eigenvalues of its adjacency matrix, the second-largest one is no more than $\lambda$. The second-largest is non-trivial here because the largest eigenvalue of any $d$-regular graph will be $d$. We now state the standard expander mixing lemma.

\begin{lemma}[Expander mixing lemma]\label{EML}
Given an $(n, d ,\lambda)$-regular graph $G$, and two subsets of vertices $\mathcal U$ and $\mathcal V$, we have that, $e(\mathcal U,\mathcal V),$ the number of edges connecting $\mathcal U$ and $\mathcal V$ satisfies
\[\left|e(\mathcal U,\mathcal V)-\frac{d}{n}|\mathcal U||\mathcal V|\right|\leq \lambda \sqrt{|\mathcal U||\mathcal V|}.\]
\end{lemma}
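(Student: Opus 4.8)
The plan is to run the standard spectral argument: reinterpret the edge count between $\mathcal U$ and $\mathcal V$ as a bilinear form in the adjacency matrix of $G$, and then exploit the spectral gap encoded by $\lambda$. Let $M$ denote the adjacency matrix of $G$, an $n \times n$ real symmetric $0/1$ matrix. By the spectral theorem, $M$ admits an orthonormal basis of real eigenvectors, and since $G$ is $d$-regular the all-ones vector $\mathbf 1$ is an eigenvector with eigenvalue $d$, which is the Perron (largest) eigenvalue. I would therefore take $v_1 = \mathbf 1/\sqrt n$ with eigenvalue $\lambda_1 = d$, complete it to an orthonormal eigenbasis $v_1, \dots, v_n$ with eigenvalues $\lambda_1, \dots, \lambda_n$, and invoke the defining property of an $(n,d,\lambda)$-graph to conclude that $|\lambda_i| \le \lambda$ for every $i \ge 2$.

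The next step is to express the edge count as a quadratic form. Writing $\mathbf 1_{\mathcal U}$ and $\mathbf 1_{\mathcal V}$ for the indicator column vectors of the two vertex sets, the $(x,y)$ entry of $M$ records whether $xy$ is an edge, so $e(\mathcal U, \mathcal V) = \mathbf 1_{\mathcal U}^{\top} M \mathbf 1_{\mathcal V}$. Expanding both indicators in the eigenbasis as $\mathbf 1_{\mathcal U} = \sum_i \alpha_i v_i$ and $\mathbf 1_{\mathcal V} = \sum_i \beta_i v_i$, and using $M v_i = \lambda_i v_i$ together with orthonormality, gives
\[
e(\mathcal U, \mathcal V) = \sum_{i=1}^{n} \lambda_i \alpha_i \beta_i .
\]
The $i = 1$ term is exactly the main term: since $\alpha_1 = \langle \mathbf 1_{\mathcal U}, v_1\rangle = |\mathcal U|/\sqrt n$ and $\beta_1 = |\mathcal V|/\sqrt n$, it equals $d \cdot \frac{|\mathcal U|}{\sqrt n}\cdot \frac{|\mathcal V|}{\sqrt n} = \frac{d}{n}|\mathcal U||\mathcal V|$, which is precisely the quantity subtracted on the left-hand side of the lemma.

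It then remains to control the tail $\sum_{i \ge 2} \lambda_i \alpha_i \beta_i$. Using $|\lambda_i| \le \lambda$, the Cauchy--Schwarz inequality, and Parseval's identity $\sum_i \alpha_i^2 = \|\mathbf 1_{\mathcal U}\|^2 = |\mathcal U|$ (and likewise $\sum_i \beta_i^2 = |\mathcal V|$), I would estimate
\[
\left| \sum_{i \ge 2} \lambda_i \alpha_i \beta_i \right| \le \lambda \Big(\sum_{i\ge 2}\alpha_i^2\Big)^{1/2}\Big(\sum_{i\ge 2}\beta_i^2\Big)^{1/2} \le \lambda \sqrt{|\mathcal U||\mathcal V|},
\]
which is the claimed bound. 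There is no deep obstacle here, as the argument is entirely linear-algebraic; the only points demanding care are bookkeeping ones. First, one must confirm that the intended convention for $e(\mathcal U, \mathcal V)$ (ordered incident pairs, with edges inside $\mathcal U \cap \mathcal V$ handled consistently, and no loops since $M$ has zero diagonal) genuinely matches $\mathbf 1_{\mathcal U}^{\top} M \mathbf 1_{\mathcal V}$. Second, one should verify that separating the $i=1$ term cleanly isolates $\tfrac{d}{n}|\mathcal U||\mathcal V|$ while all surviving eigenvalues are bounded by $\lambda$; in degenerate cases (for instance disconnected or bipartite $G$, where $d$ or $-d$ reappears in the spectrum) the definition forces $\lambda \ge d$, so the estimate remains valid, if weak.
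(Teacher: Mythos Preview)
Your proof is the standard spectral argument and is correct; the paper itself does not prove this lemma but merely states it as the ``celebrated expander mixing lemma,'' so there is no in-paper proof to compare against. Your treatment is exactly the classical one the authors have in mind.
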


When working with multi-sets, the following $L^2$ variant is often more useful. To state it, we recall some standard concepts. For two functions $f$ and $g$ on some inner product space $\mathcal X$, we denote their inner product by
\[\langle f, g \rangle \coloneqq \sum_{x\in \mathcal X} f(x)\overline{g(x)},\]
where here, $\overline{g(x)}$ denotes complex conjugation of $g(x)$. We also define the $L^2$-norm of $f$ to be
\[||f||_2 \coloneqq \langle f, f \rangle,\]
and the expected value of $f$ to be
\[\mathbb E(f) \coloneqq \frac{1}{|\mathcal X|}\sum_{x\in \mathcal X}f(x).\]

\begin{lemma}[$L^2$ Expander mixing lemma]\label{EML2}
Given an $(n, d, \lambda)$-regular graph $G$ with adjacency matrix $A$, and two functions $f$ and $g$ in $L^2$ of the vertex set. Then we have
\[\left|\langle f, Ag\rangle -dn\mathbb E(f) \mathbb E(g)\right|\leq \lambda ||f||_2||g||_2.\]
\end{lemma}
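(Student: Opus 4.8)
The plan is to prove the $L^2$ expander mixing lemma via the spectral decomposition of the adjacency matrix $A$, isolating the contribution of the top eigenvector (the all-ones direction) as the main term and controlling everything else by the spectral gap parameter $\lambda$. Since $G$ is an undirected graph on $n$ vertices, its adjacency matrix $A$ is real and symmetric, so the spectral theorem supplies an orthonormal eigenbasis $v_1, v_2, \dots, v_n$ with real eigenvalues $\mu_1, \mu_2, \dots, \mu_n$. Because $G$ is $d$-regular, the normalized all-ones vector $v_1 \coloneqq \mathbf 1/\sqrt n$ is an eigenvector with eigenvalue $\mu_1 = d$; and by the defining property of an $(n,d,\lambda)$-graph, every remaining eigenvalue satisfies $|\mu_i| \le \lambda$ for $i \ge 2$.

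First I would expand both functions in this eigenbasis as $f = \sum_i a_i v_i$ and $g = \sum_i b_i v_i$, where $a_i = \langle f, v_i\rangle$ and $b_i = \langle g, v_i\rangle$. The crucial observation is that the leading coefficients encode exactly the expectations appearing in the statement: since $v_1 = \mathbf 1/\sqrt n$, we have $a_1 = \tfrac{1}{\sqrt n}\sum_{x} f(x) = \sqrt n\,\mathbb E(f)$, and likewise $b_1 = \sqrt n\,\mathbb E(g)$. Applying $A$ to the expansion of $g$, using $A v_i = \mu_i v_i$ together with orthonormality of the $v_i$, gives
\[
\langle f, Ag\rangle = \sum_{i=1}^n \mu_i\, a_i \overline{b_i}.
\]
The $i = 1$ term here equals $\mu_1 a_1 \overline{b_1} = d\cdot \sqrt n\,\mathbb E(f)\cdot \sqrt n\,\overline{\mathbb E(g)} = dn\,\mathbb E(f)\mathbb E(g)$, which is precisely the quantity being subtracted on the left-hand side of the claim. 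Hence the difference $\langle f, Ag\rangle - dn\,\mathbb E(f)\mathbb E(g)$ is exactly the tail sum $\sum_{i\ge 2}\mu_i a_i\overline{b_i}$.

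It then remains to bound this tail. Using $|\mu_i|\le\lambda$ for $i\ge 2$, the triangle inequality, and the Cauchy--Schwarz inequality, I would write
\[
\Bigl|\sum_{i\ge 2}\mu_i a_i\overline{b_i}\Bigr| \le \lambda \sum_{i\ge 2}|a_i|\,|b_i| \le \lambda \Bigl(\sum_{i\ge 2}|a_i|^2\Bigr)^{1/2}\Bigl(\sum_{i\ge 2}|b_i|^2\Bigr)^{1/2}\le \lambda\,\|f\|_2\,\|g\|_2,
\]
where the final inequality drops the nonnegative $i=1$ contributions and invokes Parseval's identity $\sum_i |a_i|^2 = \|f\|_2^2$. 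This establishes the estimate. As a sanity check and a reduction of Lemma~\ref{EML} to this $L^2$ form, I would note that specializing to the indicator functions $f = 1_{\mathcal U}$ and $g = 1_{\mathcal V}$ recovers the standard statement, since then $\langle f, Ag\rangle = e(\mathcal U, \mathcal V)$, $\mathbb E(f) = |\mathcal U|/n$, and $\|f\|_2 = \sqrt{|\mathcal U|}$.

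There is no deep obstacle here, as this is a classical argument; the only points demanding care are bookkeeping ones. First, for complex-valued $f$ and $g$ one must track the conjugations so that the leading term emerges cleanly as $dn\,\mathbb E(f)\mathbb E(g)$; this works because $\mu_1 = d$ and $a_1 \overline{b_1} = n\,\mathbb E(f)\overline{\mathbb E(g)}$, with the conjugate absorbed consistently into the definition of $\mathbb E(g)$. Second, the Cauchy--Schwarz/Parseval step requires the genuine $L^2$ norm $\|f\|_2 = \langle f,f\rangle^{1/2}$ rather than $\langle f, f\rangle$ itself; this is the convention under which the inequality is dimensionally correct, and it is the one I would use throughout.
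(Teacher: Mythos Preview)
Your argument is the standard and correct spectral proof of the $L^2$ expander mixing lemma. The paper, however, does not supply its own proof of Lemma~\ref{EML2}: both Lemma~\ref{EML} and Lemma~\ref{EML2} are stated as classical background results (the paper merely ``reminds the reader'' of them and refers to the literature, e.g.\ \cite{HLR}), so there is no in-paper argument to compare against. Your observation that the paper's displayed definition $\|f\|_2 \coloneqq \langle f,f\rangle$ must be read as $\langle f,f\rangle^{1/2}$ for the inequality to be dimensionally correct is well taken, and your handling of the complex conjugate in the main term is consistent with how the lemma is actually applied in the paper (to real nonnegative multiplicity functions).
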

\subsubsection{Proof of Theorem \ref{multi-incidence}}
To get a handle on the incidences in this setting, we will use a popular technique of embedding our incidence structure into an appropriate projective plane, where the regularity of incidences can more clearly be quantified and manipulated. In general, the standard projective embedding in each variable. Namely we map $\F_q^2 \times \F_q^2$ into $\PF_q^2 \times \PF_q^2$ by identifying $(x, y, z, w)$ with the equivalence class of $(x, y, 1, z ,w, 1)$ modulo dilations (nonzero scalar multiples). Following this embedding, and considering how lines consist of points, this will entail that any line-pair in $\F_q^2 \times \F_q^2$ also can be represented uniquely as an equivalence class in $\PF_q^2 \times \PF_q^2$ of some element $h = (h_1, 1, h_2, 1) \in \F_q^3 \times \F_q^3$ with $h_1, h_2 \neq 0$. For each $x = (x_1, x_2) \in \F_q^2 \times \F_q^2$, we denote $[x] = ([x_1], [x_2])$ the equivalence class of $x$ in $\F_q^3 \times \F_q^3$, i.e., $[x_i] \in \PF_q^2$ is the equivalence class of $x_i$ in $\F_q^3$ for any $i = 1, 2$. 

Let $G_q^{2,2}$ denote the graph whose vertices are the points of $\PF_q^2 \times \PF_q^2$, where two vertices $[x] = ([x_1], [x_2])$ and $[y]= ([y_1], [y_2])$ are connected if and only if
\[
\langle x_1, y_1 \rangle = \langle x_2, y_2 \rangle = 0.
\]
That is the points represented by $[x_i]$ and $[y_i]$ lie on the lines represented by $[y_i]$ and $[x_i]$, respectively. It is well-known that $G_q^{2,2}$ has $n = (q^2 + q + 1)^2$ vertices and $G_q^{2,2}$ is a $k$-regular graph with $k = (q + 1)^2$. Let $A$ be the adjacency matrix of $G_q^{2,2}$. The largest eigenvalue
of $A$ is $k=(q+1)^2,$ because the largest eigenvalue of the adjacency matrix of a $k$-regular graph is $k.$ 
Since two lines in $\PF_q^2$ intersect at exactly one point, two line-pairs in $\PF_q^2 \times \PF_q^2$ intersect at either one point or $q+1$ points. Let $N([x], [y])$ denote the number of common neighbors of two vertices $[x]$ and $[y]$ in $G_q^{2,2}$. As a result,  we get
\[
N([x], [y]) = 
\begin{cases}
(q+1)^2, & \text{if $[x_1] = [y_1]$ and $[x_2] = [y_2]$}\\
q+1, & \text{if either $[x_1] = [y_1]$ or $[x_2] = [y_2]$}\\
1, & \text{if $[x_1] \neq [y_1]$ and $[x_2] \neq [y_2]$}
\end{cases}
\]
where $x = (x_1, x_2)$ and $y = (y_1, y_2)$. Therefore we can write 
\[
A^2 = AA^T = J + ((q+1)^2 - 1)I + q E,
\]
where $J$ is the $n \times n$ all ones matrix, $I$ is the $n \times n$ identity matrix, and $E$ is the adjacency matrix of some $2q(q+1)$-regular graph. Note that this is due to the fact that for each $[x] = ([x_1], [x_2])$, there are $2((q^2+q+1)-1) = 2q(q+1)$ vertices $[y] = ([y_1],[y_2])$ such that either $[x_1] = [y_1]$ or $[x_2] = [y_2]$.

Let $\mathcal B$ be the set of vertices of $G_q^{2,2}$ that represent the collection $\P$ of points in $\F_q^2 \times \F_q^2$ and $\mathcal C$ be the set of vertices of $G$ that represent the collection $\L$ of line-pairs in $\F_q^2 \times \F_q^2$. Note that $I(\P, \L)$ is exactly $e(\mathcal B, \mathcal C)$, which is the number of ordered pairs $(u, v)$ where $u \in \mathcal B$, $v \in \mathcal C$ and $uv$ is an edge of $G_q^{2,2}$. Assuming that the absolute value of each of its eigenvalues but the largest one $k=(q+1)^2$ is at most $\lambda$, then we have the following estimate from the expander mixing lemma
\begin{align*}
\left| e(\mathcal B, \mathcal C) - \frac{(q+1)^2}{n} |\mathcal B||\mathcal C| \right| \le 
\lambda \sqrt{|\mathcal B||\mathcal C|},
\end{align*}
which means
\begin{align*}
\left| I(\P, \L) - \frac{(q+1)^2}{(q^2+q+1)^2} |\P||\L| \right| \le \lambda \sqrt{|\P||\L|}.
\end{align*}

Now, assuming that $v$ is the eigenvector to the second eigenvalue of $A^2$, we have
\[
A^2 v= Jv+ ((q+1)^2-1)I v + q Ev,
\]
and so
\[
|\lambda^2||v| \leq ((q+1)^2-1)|v| + q (2(q^2+q))|v|,
\]
which leads to
\[
|\lambda|\leq \sqrt{((q+1)^2-1) + q (2(q^2+q))} = O(q^{\frac{3}{2}}).
\]
In conclusion, we have
\begin{align*}
\left| I(\P, \L) - \frac{|\P||\L|}{q^2} \right| \lesssim 
q^{\frac{3}{2}} \sqrt{|\P||\L|},
\end{align*}
as desired.

The proof of the multi-set version of the result is identical to the proof above but with the application of the $L^2$ expander mixing lemma (Lemma \ref{EML2})instead of the traditional one (see the proof of Lemma 14 in \cite{HLR}). 

\subsubsection{Sketch of proof of Theorem \ref{multi-point-hyperplane}}
To prove this generalization of the previous result, we follow the same scheme as above, but with a few clear, yet somewhat tedious modifications. In particular, after making the necessary changes, we get through the setup of the previous proof, and encounter the graph $G_q^{d_1,d_2},$ which has been studied sufficiently for our purposes. It has $n = \frac{(q^{d_1+1}-1)(q^{d_2+1}-1)}{(q-1)^2}$ vertices and $G_q^{d_1,d_2}$ and is a $k$-regular graph with $k = \frac{(q^{d_1}-1)(q^{d_2}-1)}{(q-1)^2}$. Let $A$ be its adjacency matrix. It is well known that the largest eigenvalue
of $A$ is $k=\frac{(q^{d_1}-1)(q^{d_2}-1)}{(q-1)^2}$. Since the intersection of two hyperplanes in $\PF_q^d$ is a $d-2$-subspace. Now, considering the common neighbors in $G_Q^{d_1,d_2}$ also becomes more intricate:
\[
N([x], [y]) = 
\begin{cases}
\frac{(q^{d_1}-1)(q^{d_2}-1)}{(q-1)^2}, & \text{if $[x_1] = [y_1]$ and $[x_2] = [y_2]$}\\
\frac{(q^{d_1}-1)(q^{d_2-1}-1)}{(q-1)^2}, & \text{if either $[x_1] = [y_1]$ and $[x_2] \neq [y_2]$}\\
\frac{(q^{d_1-1}-1)(q^{d_2}-1)}{(q-1)^2}, & \text{if either $[x_1] \neq [y_1]$ and $[x_2] = [y_2]$}\\
\frac{(q^{d_1-1}-1)(q^{d_2-1}-1)}{(q-1)^2}, & \text{if $[x_1] \neq [y_1]$ and $[x_2] \neq [y_2]$}
\end{cases}
\]
This leads to a more complicated eigenvalue estimate. Namely, by continuing as in the previous proof, and assuming that $v$ is the eigenvector to the second eigenvalue of $A^2$, we obtain the estimate
\begin{align*}
|\lambda^2||v| &\leq \biggl(\frac{(q^{d_1}-1)(q^{d_2}-1)}{(q-1)^2}- \frac{(q^{d_1-1}-1)(q^{d_2-1}-1)}{(q-1)^2}\\
&+\left(q^{d_2-1}\frac{(q^{d_1-1}-1)}{q-1}\right)\left(\frac{(q^{d_2+1}-1)}{q-1}-1\right)\\
&+ \left(q^{d_1-1}\frac{(q^{d_2-1}-1)}{q-1}\right)\left(\frac{(q^{d_1+1}-1)}{q-1} -1\right)\biggr)|v|,
\end{align*}
\\
which leads to
\[
|\lambda| = O(q^{(d_1+2d_2-3)/2}).
\]
The rest follows as above, by the expander mixing lemma, or the $L^2$ expander mixing lemma in the case of multi-sets.

\subsection{Proof of Theorem \ref{AtimesB,AL_small}}
We first recall the following lemma from \cite{SdZ}, which is a useful consequence of Cauchy-Schwarz.

\begin{lemma}[{\cite[Lemma 7]{SdZ}}]\label{lemma7-SdZ}
Let $\X$ and $\Y$ be finite sets, and let $\varphi \colon \X \rightarrow \Z$ be a function, where $\Y \subset \Z$. Define the set
\[
\E = \{(x, x') \in \X \times \X \colon \varphi(x) = \varphi(x')\}.
\]
Then, we have
\[
|\{(x, y) \in \X \times \Y \colon \varphi(x) = y\}| \le |\Y|^{\frac{1}{2}} |\E|^{\frac{1}{2}}.
\]
\end{lemma}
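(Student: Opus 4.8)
The plan is to derive this as essentially a one-line consequence of the Cauchy--Schwarz inequality, the only structural input being the hypothesis $\Y \subseteq \Z$, which lets us forget where the common value of $\varphi$ lands.

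First I would rewrite the quantity on the left as a sum of fiber sizes. For each $y \in \Y$ put $r(y) \coloneqq |\{x \in \X : \varphi(x) = y\}|$, so that
\[
|\{(x,y) \in \X \times \Y : \varphi(x) = y\}| \;=\; \sum_{y \in \Y} r(y).
\]
Pairing each $r(y)$ against $1$ and applying Cauchy--Schwarz gives
\[
\sum_{y \in \Y} r(y) \;\le\; |\Y|^{\frac12}\Bigl(\sum_{y \in \Y} r(y)^2\Bigr)^{\frac12}.
\]

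Second, I would recognize the inner sum of squares as a collision count. Since $r(y)^2 = |\{(x,x') \in \X \times \X : \varphi(x) = \varphi(x') = y\}|$ and the fibers $\varphi^{-1}(y)$ are pairwise disjoint for distinct $y \in \Y \subseteq \Z$, we obtain
\[
\sum_{y \in \Y} r(y)^2 \;=\; |\{(x,x') \in \X \times \X : \varphi(x) = \varphi(x') \in \Y\}| \;\le\; |\E|,
\]
where the final inequality is exactly the place the inclusion $\Y \subseteq \Z$ is invoked: dropping the constraint that the common value lie in $\Y$ only enlarges the set, and $\E$ imposes no membership requirement on $\varphi(x) = \varphi(x')$ at all. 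Substituting this into the previous display yields $|\{(x,y) \in \X \times \Y : \varphi(x) = y\}| \le |\Y|^{\frac12}|\E|^{\frac12}$, as claimed.

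There is essentially no genuine obstacle here; this is a textbook double-counting estimate. The only point deserving a moment's care is the bookkeeping in the second step --- confirming that $\sum_{y \in \Y} r(y)^2$ genuinely equals the number of collision pairs with $\varphi(x) = \varphi(x') \in \Y$ (it does, precisely because distinct fibers of $\varphi$ do not overlap) and that the subsequent relaxation to $|\E|$ goes in the correct direction. Everything else is a direct application of Cauchy--Schwarz.
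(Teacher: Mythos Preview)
Your proof is correct and matches what the paper has: the lemma is quoted from \cite{SdZ} without proof, described only as ``a useful consequence of Cauchy--Schwarz,'' which is exactly the argument you give. There is nothing to add.
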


\begin{proof}[Proof of Theorem \ref{AtimesB,AL_small}]
Since $\L$ consists only of those line-pairs with non-vertical components, a line-pair from $\L$ can only be of the form
\[
(\{Y=s_1X+t_1\}, \{Y=s_2X+t_2\}),
\]
where $(s_1, t_1, s_2, t_2)$ is some quadruple from $\F_q^4$, and $\{Y=s_1X+t_1\}$ is the set of pairs $(X,Y)\in \mathbb F_q^2$ satisfying the stated equation. We use similar shorthand notation elsewhere when context is clear. Therefore, we can identify $\L$ with the set consisting of such quadruples. Then, the number of incidences between $\P$ and $\L$ is given by
\[
I(\P, \L) = |\{(x_1, x_2, y_1, y_2, s_1, t_1, s_2, t_2) \in \A \times \B \times \L \colon x_is_i+t_i=y_i\}|.
\]
To proceed next, if we define 
\[
\E \coloneqq \{(x_1, x_2, s_1, t_1, s_2, t_2, x_1', x_2', s_1', t_1', s_2', t_2') \in (\A \times \L)^2 \colon s_ix_i+t_i=s_i'x_i'+t_i'\},
\]
then an application of Lemma \ref{lemma7-SdZ} with $\varphi(x_1,x_2,s_1,s_2,t_1,t_2) = (s_1x_1+t_1, s_2x_2+t_2)$ yields
\[
I(\P, \L) \le |\B|^{\frac{1}{2}} |\E|^{\frac{1}{2}}.
\]
We next bound $|\E|$ by applying Theorem \ref{multi-point-hyperplane} with the point set $\Q$ and the plane-pair set $\RR$ given by
\[
\Q \coloneqq \{((x_1,s'_1,t'_1), (x_2,s'_2,t'_2)) \colon (x_1, x_2) \in \A, \ (s'_1,t'_1,s'_2,t'_2) \in \L\}, 
\]
\[
\RR \coloneqq \{(\{s_1X+t_1=x_1'Y+Z\}, \{s_2X+t_2=x_2'Y+Z\}) \colon (s_1,t_1,s_2,t_2) \in \L, \ (x'_1, x'_2) \in \A\}.
\]
Since $|\Q| = |\RR| = |\A||\L|$, and $d_1=d_2=3,$ this yields
\[
|\E| \le \frac{|\A|^2|\L|^2}{q^2} + q^{3} |\A| |\L|.
\]
Furthermore, one also gets
\[
I(\P, \L) \le q^{\frac{3}{2}}|\A|^{\frac{1}{2}}|\B|^{\frac{1}{2}}|\L|^{\frac{1}{2}} 
\]
provided that $|\A||\L| \lesssim q^5$.
\end{proof}

\subsection{Proof of Theorem \ref{sophieFrank}}
We begin by stating and proving a variant of Lemma 8 from \cite{SdZ} that will allow us to reduce to our problem to estimating the incidences on a subset that behaves like a Cartesian product.
\begin{lemma}\label{lem: 3.6}
Let $\P$ be a set of $m$ point-pairs and let $\L$ be a set of $n$ line-pairs, both in $\Fq^2 \times \Fq^2$, such that there are between $c_1 K$ and $c_2 K$ line-pairs in $\L$ passing through each point-pair in $\P$ for some constants $c_2 > c_1 >0$. Assume that 
\[
K \ge \frac{4n}{c_1 m},\ K\ge \frac{32(q-1)}{c_1},\ K^3 \ge \frac{2^{12} n^2 c_2 (q-1)}{c_1^3 m}, ~\mbox{and} \quad 
\frac{c_1^2 K m}{2^5 n} \ge 2c_2 (q-1).
\]
Then, there exist two point-pairs $a=(a_1,a_2),\ b=(b_1,b_2)$ in $P$, and a set 
\[
\G \subseteq \{(\rho_1,\rho_2) \in \P \colon \rho_1 \not\in \ell_{a_1,b_1}, \rho_2 \not\in \ell_{a_2,b_2}\}
\]
with $|\G| \ge c_1^4K^4 m/(2^{11} n^2) $, such that $\G$ is covered by at most $c_2K$ line-pairs from $\L$ through $a$, and by at most $c_2K$ line-pairs from $\L$ through $b$.
\end{lemma}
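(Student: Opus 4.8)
The plan is to prove the lemma by the same two‑stage popularity argument used for \cite[Lemma 8]{SdZ}, adapted to point‑pairs and line‑pairs. Throughout, for a point‑pair $u$ write $\L_u$ for the collection of line‑pairs of $\L$ through $u$; the degree hypothesis then reads $c_1K\le|\L_u|\le c_2K$ for every $u\in\P$, whence $c_1Km\le I(\P,\L)\le c_2Km$. A useful first remark is that the two ``covering'' clauses of the conclusion are essentially automatic: any subset of $\P$ consisting of point‑pairs lying on line‑pairs through $a$ (resp.\ $b$) is covered by $\L_a$ (resp.\ $\L_b$), which has at most $c_2K$ elements. So the whole task is to produce two point‑pairs $a,b\in\P$ and a large subset $\G\subseteq\P$ avoiding $\ell_{a_1,b_1}$ in the first coordinate and $\ell_{a_2,b_2}$ in the second, such that every point‑pair of $\G$ is joined to $a$ by a line‑pair of $\L$ and to $b$ by a line‑pair of $\L$.

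First I would run a second‑moment argument to find a rich point‑pair $a$. By Cauchy--Schwarz applied to $I(\P,\L)=\sum_{\ell\in\L}|\P\cap\ell|$ one gets $\sum_{\ell}|\P\cap\ell|^2\ge I(\P,\L)^2/n$, and the left‑hand side equals $I(\P,\L)+\sum_{\rho\neq\rho'}|\L_\rho\cap\L_{\rho'}|$; the hypothesis $c_1^2Km/(2^5n)\ge 2c_2(q-1)$ lets one discard the linear term, so $\sum_{\rho\neq\rho'}|\L_\rho\cap\L_{\rho'}|\gtrsim c_1^2K^2m^2/n$. Averaging over one endpoint yields $a=(a_1,a_2)\in\P$ with $\#\{(\rho,\ell):\rho\in\P\setminus\{a\},\ \ell\in\L_a\cap\L_\rho\}\gtrsim c_1^2K^2m/n$. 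The delicate point is to convert this count of flags into a lower bound on the number of \emph{point‑pairs} $\rho$ joined to $a$: a $\rho$ sharing a coordinate with $a$ may lie on up to $q+1$ line‑pairs through $a$, but for each fixed $\ell=(\ell_1,\ell_2)\in\L_a$ there are at most $q$ such degenerate $\rho$ with $\rho_1=a_1$ (they lie in $\{a_1\}\times\ell_2$) and at most $q$ with $\rho_2=a_2$, so the total number of degenerate flags is $O(qc_2K)$, and is absorbed by the same hypothesis. Setting $\P_a:=\{\rho\in\P:\rho_1\neq a_1,\ \rho_2\neq a_2,\ \L_a\cap\L_\rho\neq\emptyset\}$ — on which each $\rho$ is joined to $a$ by a \emph{unique} line‑pair — we obtain $|\P_a|\gtrsim c_1^2K^2m/n$.

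Next I would repeat the argument inside $\P_a$: its point‑pairs still each lie on at least $c_1K$ line‑pairs of $\L$, so the same Cauchy--Schwarz step (now the hypotheses $K\ge 4n/(c_1m)$ and $K^3\ge 2^{12}n^2c_2(q-1)/(c_1^3m)$ ensure the second moment beats the error terms) produces $b=(b_1,b_2)\in\P_a$ and, after the same degenerate‑flag bookkeeping, a set $\P_{a,b}:=\{\rho\in\P_a:\rho_1\neq b_1,\ \rho_2\neq b_2,\ \L_b\cap\L_\rho\neq\emptyset\}$ with $|\P_{a,b}|\gtrsim c_1^2K^2|\P_a|/n\gtrsim c_1^4K^4m/n^2$, each point‑pair of which is joined to $a$ by a unique line‑pair of $\L$ and to $b$ by a unique line‑pair of $\L$. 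Since $b\in\P_a$ we have $b_1\neq a_1$ and $b_2\neq a_2$, so $\ell_{a_1,b_1}$ and $\ell_{a_2,b_2}$ are defined and $a\neq b$.

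Finally, $\G$ is obtained by deleting from $\P_{a,b}$ the point‑pairs $\rho$ with $\rho_1\in\ell_{a_1,b_1}$ or $\rho_2\in\ell_{a_2,b_2}$. The structural reason this deletion is cheap is that such a $\rho$ is tightly constrained: if $\rho_1\in\ell_{a_1,b_1}$ and $\rho_1\notin\{a_1,b_1\}$, then the unique line‑pair of $\L_a$ through $\rho$ and the unique line‑pair of $\L_b$ through $\rho$ both have first component equal to the single line $\ell_{a_1,b_1}$; writing $L$ and $L'$ for their second components (lines through $a_2$ and $b_2$ respectively), $\rho_2\in L\cap L'$, so either $\rho_2$ is pinned down (when $L\neq L'$) or $\rho_2\in\ell_{a_2,b_2}$ and $\rho$ is being deleted anyway. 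Counting the at most $q+1$ possibilities for $L$ (lines through $a_2$ arising as a second component of a line‑pair of $\L$ with first component $\ell_{a_1,b_1}$) and likewise for $L'$, and symmetrically in the other coordinate, bounds the number of deleted point‑pairs; comparing this bound with $|\P_{a,b}|\gtrsim c_1^4K^4m/n^2$ via the hypotheses $K\ge 32(q-1)/c_1$ and $K^3\ge 2^{12}n^2c_2(q-1)/(c_1^3m)$ shows the deletion removes at most half of $\P_{a,b}$, leaving $|\G|\ge c_1^4K^4m/(2^{11}n^2)$. I expect this last step — extracting a sufficiently sharp estimate for the deleted set rather than the crude product of ``$\le q$'' bounds, and checking it is dominated by the right‑hand side of the third hypothesis — to be the main obstacle; one should also keep an eye on degenerate placements (e.g.\ $\P$ nearly confined to a line in one coordinate), since it is against exactly such configurations that the several lower bounds on $K$ are calibrated.
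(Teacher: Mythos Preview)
Your two–stage popularity scheme is exactly the strategy the paper uses; the substitution of a Cauchy--Schwarz second–moment for the paper's ``rich line'' threshold $\L_1=\{\ell:|\ell\cap\P|\ge I/(2n)\}$ plus pigeonholing is cosmetic, and your treatment of the degenerate flags (points sharing a coordinate with $a$) is the same as the paper's. The paper likewise passes to a set $\Q$ (your $\P_a$) with $|\Q|\ge c_1^2K^2m/(2^5n)$, then pigeonholes again inside $\Q$ to find $b$.

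The one place your write–up does not close is the final deletion, and it is a genuine gap rather than just bookkeeping. Your structural argument pins $\rho_2$ as the intersection $L\cap L'$ of the two second components, but it never accounts for $\rho_1$: once $\rho_2$ is fixed, $\rho_1$ may still range over all of $\ell_{a_1,b_1}\setminus\{a_1,b_1\}$, because the two line–pairs $(\ell_{a_1,b_1},L)$ and $(\ell_{a_1,b_1},L')$ share the \emph{entire} first component. So the count you actually get is of order $q\cdot(\text{choices for }L)\cdot(\text{choices for }L')\approx q\cdot(q{+}1)^2$, and none of the four hypotheses forces $c_1^4K^4m/n^2$ to dominate $q^3$. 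The paper sidesteps this by never trying to bound the deleted set directly. Instead it builds $\G$ from the good line–pairs through $b$: of the $\ge c_1K/4$ rich line–pairs through $b$, at most $O(q)$ have a component equal to $\ell_{a_1b_1}$ or $\ell_{a_2b_2}$; on each of the remaining ``good'' line–pairs $\ell=(\ell_1,\ell_2)$ one has $\ell_1\cap\ell_{a_1b_1}=\{b_1\}$ and $\ell_2\cap\ell_{a_2b_2}=\{b_2\}$, so the only bad points on $\ell$ have $\rho_1=b_1$ or $\rho_2=b_2$, at most $O(q)$ of them. Multiplying $(c_1K/4-O(q))$ good line–pairs by $(c_1K|\Q|/(2n)-O(q))$ good points each and noting that distinct good points sit on distinct line–pairs through $b$ (since $\rho_1\neq b_1$, $\rho_2\neq b_2$) gives $|\G|\ge c_1^4K^4m/(2^{11}n^2)$; the hypotheses $K\ge 32(q-1)/c_1$ and $K^3\ge 2^{12}n^2c_2(q-1)/(c_1^3m)$ are precisely what make the $O(q)$ subtractions negligible in each factor. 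Replacing your deletion step with this line–by–line count through $b$ fixes the argument.
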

\begin{proof}
Consider the following subset of line-pairs from $\L$
\[
\L_1 \coloneqq \{\ell \in \L \colon |\ell \cap \P| \ge I(\P, \L)/ (2n)\}.
\]
By definition of $\L_1$, we have
\[
I(\P, \L_1)  = I(\P, \L) - \sum_{\ell \not \in \L_1} |\ell \cap \P| > I(\P, \L) - (n - |\L_1|) \cdot \frac{I(\P, \L)}{2n}  \ge \frac{I(\P, \L)}{2}.
\]
Combining this with the assumptions given in the statement of the lemma, we further get
\[
I(\P, \L_1) > \frac{I(\P, \L)}{2} \ge \frac{c_1 Km}{2}.
\]
Now, by pigeonholing, there exists a point-pair $a=(a_1,a_2)$ in $\P$ incident to at least $\frac{I(\P, \L_1)}{2m}$ line-pairs in $\L_1$, and each line-pair in $\L_1$ is incident to at least $\lfloor c_1Km/(2n)-1 \rfloor$ point-pairs in $\P$ different from $a$.
Let $\L'$ be the set containing all line-pairs in $\L_1$ incident to $a$.
Let $\Q$ be the set of point-pairs $z=(z_1,z_2)$ in $\P$ 
such that $z_1 \neq a_1, z_2\ne a_2$, and 
that $a$ and $z$ are both incident to some line-pair $\ell \in \L_1$. By the definition of $\mathcal Q,$ and perhaps over-counting some of what is removed, we have the following estimate
\begin{align*}
|\Q| 
& \ge \frac{I(\P, \L_1)}{2m} \left( \frac{I(\P, \L)}{2n} - 1\right) - \sum_{\substack{(z_1, z_2) \in \Q\\ z_1=a_1}} I(\{(z_1, z_2) \}, \L') - \sum_{\substack{(z_1, z_2) \in \Q\\ z_2=a_2}} I(\{(z_1, z_2)\}, \L')\\
& \ge \frac{I(\P, \L_1)}{2m} \left(\frac{I(\P, \L)}{2n} - 1\right) - \sum_{\substack{z_2 \in \F_q^2\\ (a_1, z_2) \in \Q}} I(\{ (a_1, z_2) \}, \L') - \sum_{\substack{z_1 \in \F_q^2\\ (z_1, a_2) \in \Q}} I(\{(z_1 ,a_2) \}, \L').
\end{align*}
Because
\[
\sum_{\substack{z_2 \in \F_q^2\\ (a_1, z_2) \in \Q}} I(\{ (a_1, z_2) \}, \L') 
= |\{(z_2, \ell) \colon (a_1, z_2) \in \Q, (\ell, \ell_{z_2 a_2}) \in \L_1  \}|
 \le (q-1) c_2K,
\]
and similarly,
\[
\sum_{\substack{z_1 \in \F_q^2\\ (z_1, a_2) \in \Q}} I(\{ (z_1, a_2) \}, \L') 
= |\{(z_1, \ell) \colon (z_1, a_2) \in \Q, (\ell, \ell_{z_1 a_1}) \in \L_1  \}| 
 \le (q-1) c_2K,
\]
we are guaranteed that
\[
|\Q| \ge \frac{I(\P, \L_1)}{2m} \left( \frac{I(\P, \L)}{2n} -1\right) - 2(q-1) c_2 K \ge \frac{c_1K}{4} \left( \frac{c_1Km}{2n} -1 \right) -2(q-1)c_2K \ge \frac{c_1^2K^2m}{2^5n},
\]
where we have used the following two estimates
\[ K \ge \frac{4n}{c_1m}, \quad \frac{c_1^2Km}{2^5n} \ge 2c_2(q-1). \]
Let us consider
\[
\L_2 \coloneqq \{ \ell \in \L \colon |\ell \cap \Q|\ge I(\Q, \L)/(2n)\}.
\]
By pigeonholing again, we can find $b=(b_1,b_2)$ in $\P$ with $a_1 \neq b_1, a_2 \neq b_2$, and 
$$
\frac{I(\Q, \L_2)}{2|\Q|} \ge \frac{I(\Q, \L)}{4|\Q|} \ge \frac{c_1K}{4}
$$ 
line-pairs in $\L_2$ passing through.
Choosing 
\[
\G \coloneqq \Q \setminus \{ (z_1,z_2)\colon z_1 \in \ell_{a_1 b_1} \text{ or } z_2 \in \ell_{a_2 b_2},
\}
\]
and $\L''$ to be the set of all line-pairs in $\L_2$ incident to $b$, we have
\[
|\G| \ge \left(\frac{c_1 K}{4} - 2q +1 \right) \left( \frac{c_1 K|Q|}{2n} -1\right) \ge \frac{c_1K}{8} \cdot \frac{c_1^3 K^3 m (q-2)}{2^6n^2(q-1)}  \ge \frac{c_1^4 K^4 m (q-2)}{2^{9}n^2 (q-1)} \ge \frac{c_1^4 K^4 m}{2^{11}n^2},
\]
where we have used the following two estimates
\[
K \ge \frac{32(q-1)}{c_1}, \quad K^3 \ge \frac{2^{12}n^2 c_2 (q-1) }{c_1^3 m}.
\]
\end{proof}

By keeping $n$ fixed and proceeding by induction on $m$, we will prove that there exists a constant $C>0$ satisfying the theorem. When $m \le q$, we have $I(\P, \L) \le nm < C n^2 \sqrt{m/q}$, given that $n > C^{-1} q$. Suppose that the theorem is true for all $m \le M$ for some $M \ge q$. We now prove that the inequality holds with $m = M +1$. 
We argue by contradiction. Specifically, assume that 
\begin{equation}\label{contradict}
I(\P, \L) \ge C n^2 \sqrt{m/q}.
\end{equation}
Set $I\coloneqq I(\P, \L)$ and $K\coloneqq I/m$. Let us introduce the following two subsets of $\P$, depending on positive constants, $c_1$ and $c_2$, to be chosen later.
\[
\D \coloneqq  \{p \in \P\colon \mbox{there are $\le c_1 K$ line-pairs incident to $p$}\},
\]
\[
\E \coloneqq \{p \in \P\colon \mbox{there are $\ge c_2 K$ line-pairs incident to $p$} \}.
\]
Then, we see that 
\begin{equation}\label{I(D,L)}
I(\D, \L) \le |\D||\L| \le c_1 Km = c_1 I,    
\end{equation}
and that
\[
|\E| \cdot c_2 K \le  I(\E, \L) \le I \le Km,
\]
or $|\E| \le c_2^{-1} m \le M$. Then, by the induction hypothesis, we then get
\[
I(\E, \L) < C n^2 \sqrt{|\E|/q} \le \sqrt{1/c_2}  \cdot C n^2 \sqrt{m/q}.
\]
Combining this with \eqref{contradict} yields 
\begin{equation}\label{I(E,L)}
I(\E, \L) < c_2^{-\frac{1}{2}} I.    
\end{equation}
Consider the point set $\A \coloneqq \P \setminus (\D \cup \E)$. We claim that $|\A|\ge c_2^{-1}m$. Indeed, if not, then by the induction hypothesis, we again have
\[
I(\A,\L) <  C n^2 \sqrt{|\A|/q} \le \sqrt{1/c_2}  \cdot C n^2 \sqrt{m/q} < c_2^{-\frac{1}{2}}I, 
\]
which leads to
\[
I = I(\D, \L) + I(\E, \L) + I(\A, \L) < \left(c_1 + c_2^{-\frac{1}{2}} + c_2^{-1}\right) I,
\]
a contradiction when $c_1$ is small and $c_2$ is large enough. Therefore, we should have $|\A|\ge c_2^{-1}m$, and furthermore, 
\[
I(\A, \L) \ge \left(1 - c_1 - \sqrt{1/c_2}\right) I.
\]
Let $\A^1 \coloneqq \A$. 
We iteratively choose $\G^i \subset \A^i$ as in Lemma \ref{lem: 3.6}, so that there exist distinct point-pairs $a^i, b^i$ such that $\G^i$ is covered by at most $c_2 K$ line-pairs from $\L$ passing through $a^i$, and by at most $c_2 K$ line-pairs from $\L$ passing through $b^i$. We set $\A^{i+1} = \A^i \setminus \G^i $, repeat, and then terminate this process at $s$-th step when $|\A_{i+1}| \le c_2^{-1} m$. 
Note that it is a straightforward calculation to show that throughout the process, the conditions 
\[
K \ge \frac{4n}{c_1 |\A^i|},\ K\ge \frac{32(q-1)}{c_1},\ K^3 \ge \frac{2^{12} n^2 c_2 (q-1)}{c_1^3 |\A^i|}, ~\mbox{and} \quad \frac{c_1^2 K |\A^i|}{2^5 n} \ge 2c_2 (q-1)
\]
of Lemma \ref{lem: 3.6} hold if $C$ is chosen sufficiently large with $c_1 =2^{-10},\, c_2 =2^{20}$.
\begin{itemize}
\item The first inequality holds because 
\[
K = \frac{I}{m} \ge \frac{Cn^2}{\sqrt{mq}} \ge \frac{4n}{c_1c_2^{-1}m} \ge \frac{4n}{c_1|\A^i|}
\]
noting that $|\A^i| > c_2^{-1}m$ and that
\[
\frac{4\sqrt{q}}{c_1c_2^{-1} \sqrt{m}} < \frac{4}{c_1c_2^{-1}} < q < Cn,
\]
given that $q$ is sufficiently large.
\item The second and third inequalities hold because 
\[
K \ge \frac{Cn^2}{\sqrt{mq}} = \frac{Cn^2q}{\sqrt{mq^3}} \ge \frac{32(q-1)}{c_1},
\]
\[
K^3 \ge \frac{C^3n^6}{\sqrt{m^3q^3}} \ge \frac{2^{12}n^2c_2(q-1)}{c_1^3 c_2^{-1} m} \ge \frac{2^{12} n^2 c_2 (q-1)}{c_1^3 |\A^i|}
\]
noting that we have
\[
n^4 > \frac{2^{10}}{(c_1 C )^2} \cdot mq^3, \quad  n^4 > \frac{2^{13} c_2}{(c_1C)^3}\cdot m^{\frac{1}{2}} q^{5/2},
\]
whenever $n^4 \ge C' mq^3$ and $C'\sqrt{mq} > \frac{2^{13} c_2}{(c_1C)^3}$.
\item The fourth inequality is true because
\[ \frac{c_1^2K\left| \A^i \right|}{2^5n} \ge \frac{c_1^2K C n^2 c_2^{-1}m}{2^5n\sqrt{mq}} \ge \frac{c_1^2KCn\sqrt{m}}{2^5\sqrt{q}} \ge 2c_2(q-1)  \]
noting that
\[ 
K n \sqrt{m} \ge \frac{2^{5}c_2^{\frac{1}{2}}}{c_1^2 C} \cdot q^{\frac{3}{2}}.
\]
given that $m > q$ and $n \ge C^{-1} q$. 
\end{itemize}
From all the above, we conclude that given sufficiently large $q$ (for example, $q > C^{-1}2^{33}$), we can repeatedly use Lemma \ref{lem: 3.6} with $c_1 =2^{-10}$ and $c_2 =2^{20}$.
%
%
\noindent
After the process, we obtain the sequence $\A^1 \supset \A^2 \supset \cdots \supset \A^{s+1} $, with 
\[
|\G^i| \ge \frac{c_1^4 K^4 |\A^i|}{2^{11} n^2} \ge \frac{ c_1^4 K^4 c_2^{-1} m }{2^{11} n^2}. 
\]
As $\G^i$'s are disjoint, the process terminates after at most
\begin{equation*}
s\le \frac{m}{\min_i \left| \G^i \right| } \le \frac{2^{11} c_2 n^2}{c_1^4 K^4}
\end{equation*}
steps. Again, from the induction hypothesis, we have
%
\begin{equation}\label{I(A^{s+1},L)}
I(\A^{s+1},\L) < C  n^2 \sqrt{c_2^{-1}m/q} = c_2^{-\frac{1}{2}}I.     
\end{equation}
Since $\bigcup\G^i = \P \setminus (\D \cup \E \cup \A^{s+1})$, from \eqref{I(D,L)}, \eqref{I(E,L)}, and \eqref{I(A^{s+1},L)}, we are guaranteed that

\[
\sum_{i=1}^s I (\G^i, \L) = I- I(\D, \L) - I(\E, \L) -I(\A^{s+1},\L) > \left(1-c_1-2\cdot c_2^{-\frac{1}{2}}\right) I,
\]
and so,
\[ 
I(\P, \L) \le \left(1-c_1-2\cdot c_2^{-\frac{1}{2}}\right)^{-1}  \sum_{i=1}^s I(\G^i, \L).
\]
Since $I(\G^i, \L) \le K n$, we get 
\[
I \le \left(1-c_1-2\cdot c_2^{-\frac{1}{2}}\right)^{-1} s K n \le \left(1-c_1-2\cdot c_2^{-\frac{1}{2}}\right)^{-1} \cdot \frac{2^{11} c_2 n^3}{c_1^4 K^3} \lesssim \frac{n^3 m^3}{I^3},
\]
which implies that
\[
I(\P, \L) \lesssim |\P|^{\frac{3}{2}} |\L|^{\frac{3}{2}}.
\]
By Theorem \ref{multi-incidence}, we can estimate $I(\G^i, \L)$ by 
%
\[
I(\G^i, \L) 
\lesssim \frac{|\G^i||\L|}{q^2} + q^{\frac{3}{2}}\sqrt{|\G^i||\L|} \le \frac{c_2^2 K^2 n}{q^2} + q^{\frac{3}{2}} c_2 K n^{\frac{1}{2}}.
\]
Since the second term on the right-hand side always dominates when $n < q^3$, we obtain  
%
\[
I(\G^i, \L) \lesssim  q^{\frac{3}{2}} c_2 K n^{\frac{1}{2}} \le \frac{c_2 K^3 n^2}{q}.
\]
This then leads to
%
\[
I(\P, \L) \lesssim \frac{2^{11} c_2 n^2}{c_1^4 K^4} \cdot \frac{c_2 K^3 n^2}{q} = \frac{2^{11}c_2}{c_1^4} \cdot \frac{n^4}{Kq}. 
\]
Recalling that $K = I(\P, \L)/m$, this estimate suggests that
\[
I(\P, \L) < C n^2 \sqrt{m/q}.
\]

\section{Applications}

In order to demonstrate how multi-parameter results can be used, we include three applications. While all are clearly related, they each show a different facet of the multi-parameter setting.

\subsection{Dot product problem}

Erd\H os-type dot product problems have been studied in a number of contexts. Namely, given some subset of a vector space, what can we say about the distribution of dot products determined by points in our subset. Hart and Iosevich consider this problem in finite fields in \cite{HI}. One of the results they prove is the following.

\begin{theorem}[Theorem 1.4 from \cite{HI}]
    Suppose $\mathcal E\subset \mathbb F_q^d,$ with $|\mathcal E|\gtrsim q^\frac{d+1}{2},$ then
    \[\mathbb F_q^* \subseteq \{x\cdot y:x,y\in \mathcal E\}.\]
\end{theorem}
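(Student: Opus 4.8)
The plan is to deduce the statement from the expander mixing lemma (Lemma \ref{EML}), in exactly the spirit of the eigenvalue computations used for Theorem \ref{multi-incidence}. Fix $t \in \mathbb{F}_q^*$ and set
\[
\nu(t) = |\{(x,y) \in \mathcal{E}\times\mathcal{E} \colon x\cdot y = t\}|;
\]
it suffices to show $\nu(t) > 0$ for every such $t$. To this end I would introduce the \emph{dot-product graph} $G_t$ with vertex set $\mathbb{F}_q^d$, in which $x$ and $y$ are joined precisely when $x \cdot y = t$. Since a nonzero $x$ satisfies $x\cdot y = t$ for exactly $q^{d-1}$ vectors $y$ (the vertex $0$ being isolated because $t \neq 0$), this graph is essentially $k$-regular with $k = q^{d-1}$ on $n = q^d$ vertices, so $k/n = 1/q$ reproduces the expected main term $|\mathcal{E}|^2/q$. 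Writing $\mathcal{B}$ for the vertex set representing $\mathcal{E}$ and noting $\nu(t) = e(\mathcal{B},\mathcal{B})$, Lemma \ref{EML} gives
\[
\left| \nu(t) - \frac{|\mathcal{E}|^2}{q} \right| \le \lambda\, |\mathcal{E}|,
\]
where $\lambda$ is the second-largest eigenvalue (in absolute value) of the adjacency matrix $A$ of $G_t$; positivity of $\nu(t)$ then follows as soon as $|\mathcal{E}|/q > \lambda$, i.e. $|\mathcal{E}| \gtrsim q\lambda$.

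The crux is therefore to show $\lambda \lesssim q^{(d-1)/2}$, which is exactly what forces the threshold $q^{(d+1)/2}$. I would obtain this by the same $A^2 = AA^T$ trick as in Theorem \ref{multi-incidence}: the entry $(A^2)_{x,x'}$ counts common neighbors, i.e. solutions $y$ of $x\cdot y = x'\cdot y = t$. A short case analysis shows this count is $q^{d-1}$ when $x = x'$, is $q^{d-2}$ when $x, x'$ are linearly independent, and --- crucially --- is $0$ when $x, x'$ are nonzero, distinct, and linearly dependent (for then $x' = cx$ with $c \neq 1$ would force $ct = t$). Consequently
\[
A^2 = q^{d-2} J + (q^{d-1} - q^{d-2}) I - q^{d-2} B + (\text{corrections supported at } 0),
\]
where $J$ is all-ones, $I$ the identity, and $B$ is the adjacency matrix of the ``common line through the origin'' graph, a disjoint union of cliques of size $q-1$ whose eigenvalues are $O(q)$. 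Reading off the spectrum, the top eigenvalue is $k^2 = q^{2d-2}$, while every other eigenvalue of $A^2$ is at most $\max\{q^{d-1},\, q^{d-2}\cdot O(q)\} = O(q^{d-1})$, giving $\lambda \lesssim q^{(d-1)/2}$, as needed.

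The main obstacle is this eigenvalue estimate, and in particular the observation that makes it work: for $t \neq 0$, two distinct points on a common line through the origin have \emph{no} common neighbor, which is precisely what suppresses the degenerate contribution that would otherwise inflate $\lambda$ to $q^{d/2}$ and weaken the threshold to $q^{(d+2)/2}$. The routine-but-necessary bookkeeping I expect to carry out is (i) the treatment of the isolated vertex $0$ and of self-loops at points with $x \cdot x = t$, which perturb the regularity and the diagonal of $A$ only at lower order, and (ii) confirming that the affine system $x\cdot y = x'\cdot y = t$ is consistent for linearly independent $x,x'$, so that the common-neighbor count is exactly $q^{d-2}$. With $\lambda \lesssim q^{(d-1)/2}$ in hand, the expander mixing lemma yields $\nu(t) \ge |\mathcal{E}|^2/q - C q^{(d-1)/2}|\mathcal{E}| > 0$ once $|\mathcal{E}| \gtrsim q^{(d+1)/2}$, uniformly in $t \in \mathbb{F}_q^*$, which is the claim.
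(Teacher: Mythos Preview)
The paper does not actually prove this theorem: it is quoted from Hart--Iosevich \cite{HI}, and the paper only records (without derivation) that the key ingredient is the estimate \eqref{dpKey}. So there is no proof in the paper to compare against line by line; what can be compared is the \emph{method}. Hart and Iosevich obtain \eqref{dpKey} (in fact the two-sided version $|D(\mathcal{E},a) - |\mathcal{E}|^2/q| \lesssim q^{(d-1)/2}|\mathcal{E}|$, which is what one needs for positivity) via exponential sums over $\mathbb{F}_q$: they expand the indicator of $x\cdot y = a$ using additive characters and bound the error by Gauss-sum estimates.

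Your route is genuinely different in presentation, though equivalent in substance, and it is in fact closer to the spirit of the present paper than to \cite{HI}: you phrase everything through the expander mixing lemma and a combinatorial $A^2$ computation, exactly as in the proof of Theorem~\ref{multi-incidence}. Your key structural observation---that for $t\neq 0$ two distinct collinear-through-the-origin points have \emph{no} common neighbor in $G_t$---is precisely the combinatorial shadow of the character-sum cancellation that drives the Hart--Iosevich argument, and it is what pins $\lambda$ at $q^{(d-1)/2}$ rather than $q^{d/2}$. Your decomposition $A^2 = q^{d-2}J + (q^{d-1}-q^{d-2})I - q^{d-2}B$ on $(\mathbb{F}_q^d)^*$ is correct, and since $B$ is a disjoint union of $(q-1)$-cliques its nontrivial eigenvalues are $q-2$ and $-1$, giving nontrivial eigenvalues of $A^2$ equal to $q^{d-2}$ and $q^{d-1}$; so in fact $\lambda = q^{(d-1)/2}$ exactly, not merely $\lesssim$. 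The bookkeeping you flag (the isolated vertex $0$, possible loops at points with $x\cdot x = t$) is harmless: for $t\neq 0$ every neighbor of a nonzero vertex is nonzero, so restricting to $(\mathbb{F}_q^d)^*$ yields an honestly $q^{d-1}$-regular graph, and loops are permitted in Lemma~\ref{EML}. What your approach buys is a self-contained, character-free argument that dovetails with the rest of this paper; what the Fourier approach of \cite{HI} buys is a slicker path when one is already comfortable with Gauss sums and does not want to set up a graph.
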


To prove this, they get bounds on how often individual dot products occur. Their key estimate is for any nonzero $a\in \mathbb F_q,$

\begin{equation}\label{dpKey}
D(\mathcal E,a)\coloneqq |\{(x,y)\in \mathcal E\times \mathcal E : x\cdot y = a\}| \lesssim \frac{|\mathcal E|^2}{q} + q^\frac{d-1}{2}|\mathcal E|.
\end{equation}

In this section, we prove an analogous result in multi-parameter setting.

\begin{theorem}\label{thm:multi-dot}
Let $\E$ be a subset of $\F_q^2 \times \F_q^2$. For any $a,b \in \F_q$, we have 
\[
\mathbf{D}(\E, a, b) \coloneqq \left|\left\{((x, y), (z, t)) \in \E \times \E \colon x \cdot z = a, \ y \cdot t = b\right\}\right| \lesssim  \frac{|\E|^2}{q^2} + q^{\frac{3}{2}}|\E|.
\]
Moreover, if $|\E| \gtrsim q^{\frac{7}{2}}$, then we have
\[
\mathbf{D}(\E, a, b) \lesssim q^{-2}|\E|^2.
\]
\end{theorem}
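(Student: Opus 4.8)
The plan is to realize $\mathbf{D}(\E,a,b)$ as an incidence count between a point set and a line-pair set in $\F_q^2 \times \F_q^2$, and then apply Theorem~\ref{multi-incidence} (or rather its close relative, Theorem~\ref{AtimesB,AL_small}, depending on which range we want). The key observation is that for a fixed $(z,t) = w \in \E$, the conditions $x \cdot z = a$ and $y \cdot z = b$ each describe a line in $\F_q^2$ (the level set of a nonzero linear functional, since $a, b \neq 0$ forces $z \neq 0$ when there is a solution; the degenerate case $z = 0$ contributes nothing). So to each $w = (z,t) \in \E$ we associate the line-pair $\ell_w = (\{x \colon x\cdot z = a\}, \{y \colon y \cdot z = b\})$, and then $((x,y),(z,t)) \in \E \times \E$ contributes to $\mathbf{D}(\E,a,b)$ exactly when the point $(x,y) \in \E$ is incident to $\ell_w$. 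Hence $\mathbf{D}(\E,a,b) = I(\P, \L)$ where $\P$ is the image of $\E$ under $(x,y,z,t)\mapsto (x,y)$ (with multiplicity) and $\L$ is the image of $\E$ under $w \mapsto \ell_w$ (with multiplicity), each of size $|\E|$.

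Next I would feed this into Theorem~\ref{multi-incidence}. With $|\P| = |\L| = |\E|$ we get
\[
\mathbf{D}(\E,a,b) = I(\P,\L) \le \frac{|\P||\L|}{q^2} + q^{\frac{3}{2}}\sqrt{|\P||\L|} \lesssim \frac{|\E|^2}{q^2} + q^{\frac{3}{2}}|\E|,
\]
which is exactly the first claimed bound. For the second claim, observe that when $|\E| \gtrsim q^{7/2}$ the first term dominates the second: $\frac{|\E|^2}{q^2} \ge q^{3/2}|\E|$ is equivalent to $|\E| \ge q^{7/2}$. So in that regime the right-hand side is $\lesssim q^{-2}|\E|^2$, giving the moreover statement immediately.

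The only genuine subtlety is the passage to line-pairs and the handling of multiplicities. Strictly, the map $w \mapsto \ell_w$ need not be injective, and likewise $(x,y,z,t)\mapsto(x,y)$ is not injective, so $\P$ and $\L$ should be treated as multi-sets; this is fine because Theorem~\ref{multi-incidence} is stated for multi-sets, and one checks that $\sum_{p \in \overline{\P}} m(p)^2 \le |\P|^2 = |\E|^2$ trivially, with the same for $\L$, so the multi-set bound degrades to the clean bound we used. One should also dispatch the degenerate contributions where $z = 0$: if $z = 0$ then $x \cdot z = 0 \neq a$, so there are no such pairs, and the map is well-defined on the relevant part of $\E$. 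I expect this bookkeeping to be the main (minor) obstacle; the incidence input does all the real work.
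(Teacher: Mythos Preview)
Your approach is essentially identical to the paper's: define the line-pair multi-set $\L = \{(\{X : u\cdot X = a\}, \{Y : v\cdot Y = b\}) : (u,v)\in\E\}$, observe $\mathbf{D}(\E,a,b) = I(\E,\L)$ with $|\L|=|\E|$, and apply Theorem~\ref{multi-incidence}. The second claim follows exactly as you say, by comparing the two terms.

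One small slip: in your multiplicity discussion you write $\sum_{p\in\overline{\P}} m(p)^2 \le |\P|^2$ and claim this ``degrades to the clean bound.'' It does not: plugging $|\P|^2$ and $|\L|^2$ into the multi-set version of Theorem~\ref{multi-incidence} would give an error term of size $q^{3/2}|\E|^2$, which is far too large. The correct observation is that $\P=\E$ is already a \emph{set} (your ``projection $(x,y,z,t)\mapsto(x,y)$'' is the identity on $\E$, since elements of $\E$ are already pairs $(x,y)$), so $\sum_p m(p)^2 = |\E|$. For $\L$, when $a,b\neq 0$ the map $(u,v)\mapsto \ell_{(u,v)}$ is injective on the nondegenerate part (the line $\{u\cdot X=a\}$ determines $u$ uniquely once $a\neq 0$), so again $\sum_\ell m(\ell)^2 = |\E|$ and the set version of Theorem~\ref{multi-incidence} applies directly, as the paper does. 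The cases $a=0$ or $b=0$ (where scalar multiples of $u$ give the same line, and $u=0$ does not give a line at all) require a separate easy argument that both you and the paper skip.
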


\begin{proof}[Proof of Theorem \ref{thm:multi-dot}]
Fix $a,b \in \F_q$. Consider the following multi-collection of line-pairs 
\[
\L \coloneqq \{((u \cdot X = a), (v \cdot Y = b)) \colon (u, v) \in \E\}.
\]
Then, it is obvious that $\mathbf{D}(\E, a, b)$ is exactly $I(\E, \L)$ and $|\L| = |\E|$. By Theorem \ref{multi-incidence}, we have 
\[
I(\E, \L) \lesssim \frac{|\E||\L|}{q^2} + q^{\frac{3}{2}}\sqrt{|\E||\L|} = \frac{|\E|^2}{q^2} + q^{\frac{3}{2}}|\E|.
\]
When $|\E|\gtrsim q^{\frac{7}{2}}$, we further have 
\[
\frac{|\E|^2}{q^2} \gtrsim q^{\frac{3}{2}}|\E|,
\]
which leads to
\[
I(\E, \L) \lesssim \frac{|\E|^2}{q^2},
\]
as desired.
\end{proof}

Given $\E \subseteq \F_q^4$, we note that
\[
|\{x, y \in \E \colon x \cdot y = t\}| = \sum_{a+b = t} \left|\left\{x, y \in \E \colon x_1 y_1 + x_2 y_2 = a, \ x_3 y_3 + x_4y_4 = b\right\}\right|.
\]
As a result, we obtain the following estimate
\[
|\{x, y \in \E \colon x \cdot y = t\}| \lesssim \frac{|\E|^2}{q} + q^{\frac{1}{2}} |\E|.
\]

\subsection{Sum-product estimate}
One of the most-studied problems in additive combinatorics is the sums and products problem. One version of the problem starts by considering any large finite set of integers, $\A$. The goal is to show that either the set of pairwise sums of $\A$ or the set of pairwise products of $\A$ must be much larger than the size of $\A$. To state this precisely, we define the sumset and product set, respectively, 
\[
\A+\A\coloneqq\{a+b\colon a,b\in \A\} \text{ and } \A\A\coloneqq\{ab\colon a,b\in \A\}.
\]
The conjecture of Erd\H{o}s and Szemer\'{e}di is that $\max\{|\A+\A|,|\A\A|\}\gtrsim |\A|^{2-\epsilon},$ for any $\epsilon>0.$ For more on this and related problems, see the books by Nathanson \cite{N} or Tao and Vu \cite{TV}. In the specific context of finite fields, there has been much activity, much of which stemmed from the work of Bourgain, Katz, and Tao \cite{BKT}, and gaining further attention with papers like \cite{HIS} by Hart, Iosevich, and Solymosi.

In the present context, we want to deal with pairs of field elements, so for $x, y \in \F_q^2$, let us denote the element-wise product of $x$ and $y$ by $x \otimes y$. Specifically, we have
\[
\begin{pmatrix}
x_1\\x_2
\end{pmatrix}
\otimes
\begin{pmatrix}
y_1\\y_2
\end{pmatrix}
=
\begin{pmatrix}
x_1y_1\\x_2y_2
\end{pmatrix}
.
\]
Given this multi-parametric definition of product, we present a sum-product type result.
\begin{theorem}\label{thm:multi-sum-product}
Let $\A$ be a set of points in $\F_q^2$. Assume that 
\[
\min \{|\A+\A|, |\A \otimes \A| \} \lesssim q^{5}|\A|^{-2} .
\]
Then, we have
\[
\max\{|\A+\A|, |\A \otimes \A|\} \gtrsim q^{-\frac{3}{2}}|\A|^2.
\]
\end{theorem}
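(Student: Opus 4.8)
The plan is to run an Elekes-type incidence argument in which the point set encodes the sumset and the element-wise product set, and the line-pairs come from the affine dilations $x \mapsto a \otimes (x-b)$ with $a,b \in \A$.

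The first point to get right is why this configuration fits the line-pair framework at all. Since both $+$ and $\otimes$ act coordinatewise on $\F_q^2 = \F_q \times \F_q$, the graph in $\F_q^2 \times \F_q^2$ of a map $x \mapsto a \otimes x + c$ becomes, after regrouping the four ambient coordinates so that the $i$-th factor $\F_q^2$ records the $i$-th coordinate of the input together with the $i$-th coordinate of the output, precisely a line-pair $(\ell_1,\ell_2)$ whose components are non-vertical lines of slopes $a_1$ and $a_2$. Under the same regrouping the set $(\A+\A) \times (\A \otimes \A) \subseteq \F_q^2 \times \F_q^2$ turns into a point set $\P$ with $|\P| = |\A+\A|\,|\A \otimes \A|$.

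With this in hand I would set up the combinatorics. After discarding the elements of $\A$ lying on a coordinate axis (a routine pruning; either $\A$ loses only a harmless fraction, or it is essentially supported on two lines and can be treated directly), put $\L = \{ \ell_{a,b} : a,b \in \A \}$, where $\ell_{a,b}$ is the line-pair attached to $x \mapsto a \otimes (x-b)$. Distinct pairs $(a,b)$ give distinct line-pairs, so $|\L| \le |\A|^2$, and for every $c \in \A$ the point corresponding to $(c+b,\, a\otimes c)$ lies in $\P$ and on $\ell_{a,b}$, with $c \mapsto (c+b, a\otimes c)$ injective; hence each $\ell_{a,b}$ carries at least $|\A|$ points of $\P$, so that $I(\P, \L) \ge |\A|\,|\L| \gtrsim |\A|^3$. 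Now apply Theorem \ref{multi-incidence}, which imposes no Cartesian structure on $\P$: with $Q := |\A+\A|\,|\A\otimes\A| = |\P|$ and $|\L| \le |\A|^2$ it gives
\[
|\A|^3 \ \lesssim\ I(\P,\L) \ \lesssim\ \frac{Q\,|\A|^2}{q^2} \ +\ q^{3/2}\sqrt{Q}\,|\A| ,
\]
so $Q \gtrsim \min\{\, q^2 |\A|,\ |\A|^4/q^3 \,\}$. Finally, writing $Q = \min\{|\A+\A|,|\A\otimes\A|\}\cdot\max\{|\A+\A|,|\A\otimes\A|\}$ and using the hypothesis $\min\{|\A+\A|,|\A\otimes\A|\} \lesssim q^{7/2}|\A|^{-2}$, division yields
\[
\max\{|\A+\A|,|\A\otimes\A|\} \ \gtrsim\ \frac{|\A|^2}{q^{7/2}}\,\min\{\, q^2|\A|,\ |\A|^4/q^3 \,\},
\]
and a short case split on the size of $|\A|$ relative to $q^{5/3}$, together with the elementary bound $\max \ge |\A+\A| \ge |\A|$ that covers the remaining small range, produces the claimed estimate $\max\{|\A+\A|,|\A\otimes\A|\} \gtrsim q^{-3/8}|\A|$.

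The main obstacle is extracting the strongest conclusion from the incidence step. The favorable regime $|\A_0|\,|\L| \lesssim q^{7/2}$ of Theorem \ref{AtimesB,AL_small} matches the hypothesis $\min\{|\A+\A|,|\A\otimes\A|\}\,|\A|^2 \lesssim q^{7/2}$ almost on the nose, which strongly suggests invoking that sharper bound in place of Theorem \ref{multi-incidence}. The difficulty is that Theorem \ref{AtimesB,AL_small} requires the point set to be a genuine Cartesian product $\A_0 \times \B_0$ of two point sets in $\F_q^2$, whereas the regrouped $(\A+\A) \times (\A\otimes\A)$ has this shape only when $\A$ itself splits as a product; bridging this gap, by passing to a large product-structured subconfiguration or by a dyadic popularity reduction to the product case, is the delicate part. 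The axis pruning and the bookkeeping between the two terms of the incidence bound are routine by comparison.
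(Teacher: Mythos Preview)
Your Elekes-type configuration is the same one the paper uses, but the paper applies Theorem~\ref{AtimesB,AL_small} rather than Theorem~\ref{multi-incidence}, and the obstacle you describe in your last paragraph is not real. Look at the proof of Theorem~\ref{AtimesB,AL_small}: the incidence condition there is $y_i = s_i x_i + t_i$ with $(x_1,x_2)\in\A_0$ and $(y_1,y_2)\in\B_0$, so $\A_0$ supplies the pair of $X$-coordinates across the two $\F_q^2$ factors and $\B_0$ the pair of $Y$-coordinates --- not the first and second $\F_q^2$ factors separately. Your regrouped Elekes point has $X$-coordinates $c+b\in\A+\A$ and $Y$-coordinates $a\otimes c\in\A\otimes\A$, so it fits Theorem~\ref{AtimesB,AL_small} with $\A_0=\A+\A$, $\B_0=\A\otimes\A$ on the nose, and the hypothesis $\min\{|\A+\A|,|\A\otimes\A|\}\cdot|\A|^2\lesssim q^{7/2}$ is precisely the side condition $|\A_0|\,|\L|\lesssim q^{7/2}$. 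No passage to a product-structured subconfiguration or popularity reduction is needed.

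Your route via Theorem~\ref{multi-incidence} still reaches the stated conclusion, but largely because (as you yourself note) the trivial bound $\max\ge|\A+\A|\ge|\A|\ge q^{-3/8}|\A|$ already gives it. The paper's argument through Theorem~\ref{AtimesB,AL_small} actually yields the stronger product estimate $|\A+\A|\,|\A\otimes\A|\gtrsim |\A|^4 q^{-3/2}$, which Theorem~\ref{multi-incidence} does not recover in this range; so your substitution loses real information even though it suffices for the theorem as stated.
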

\begin{remark}
When $|\A| \sim q^{5/3}$, the result says that if 
\[
\min \{|\A+\A|, |\A \otimes \A| \} \lesssim |\A|,
\]
then
\[
\max \{|\A+\A|, |\A \otimes \A| \} \gtrsim |\A|^\frac{11}{6}.
\]
\end{remark}

\begin{proof}[Proof of Theorem \ref{thm:multi-sum-product}]
Set $m = \min \{|\A+\A|, |\A \otimes \A| \}$. Let us define
\[
\P \coloneqq |\A+\A| \times |\A \otimes \A|, 
\]
\[
\L \coloneqq \{(\{Y = a_1(X-a_2)\}, \{Y = a'_1(X-a'_2)\}) \colon (a_1, a_2, a'_1, a'_2) \in \A \times \A\}.
\]
Since $m|\A|^2 \lesssim q^5$, an application of Theorem \ref{AtimesB,AL_small} for $\P$ and $\L$ then yields
\begin{equation}\label{sumproduct:1}
I(\P, \L) \lesssim q^{\frac{3}{2}}\sqrt{|\P||\L|} = q^{\frac{3}{2}}|\A|\sqrt{|\A+\A||\A \otimes \A|}.    
\end{equation}
Since the pair of lines $y = a_1(x-a_2)$ and $y = a'_1(x-a'_2)$ contains the point $(C + a_2, a_1C, C' + a'_2, a'_1C')$ for any choice of $(C, C') \in \A$, each of the $|\A|^2$ lines contributes at least $|\A|$ incidences, and so we have
\begin{equation}\label{sumproduct:2}
|\A|^3 \le I(\P, \L).
\end{equation}
By combining \eqref{sumproduct:1} and \eqref{sumproduct:2}, we obtain
\[
|\A+\A||\A \otimes \A| \gtrsim q^{-3}|\A|^4,
\]
which leads to
\[
\max\{|\A+\A|, |\A \otimes \A|\} \gtrsim q^{-\frac{3}{2}}|\A|^2,
\]
as desired.
\end{proof}

\subsection{Vector-valued functions}
We offer one final example application. One can combine the ideas from the previous two subsections to get results like the following.

\begin{theorem}\label{multi-vector-valued}
Let $\A$ and $\B$ be two subsets of $\F_q^2$. Consider the function \[F(x,y) = (x_1^2-x_1y_1, x_2^2-x_2y_2).\]
If $|\A|^2|\B| \lesssim q^{5}$, then we have
\[
|F(\A, \B)| \gtrsim q^{-\frac{3}{2}} |\A||\B|.
\]
\end{theorem}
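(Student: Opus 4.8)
The plan is to mimic the proof of Theorem~\ref{thm:multi-sum-product}, building a point set and a line-pair set whose incidences are controlled both above (by Theorem~\ref{AtimesB,AL_small}) and below (by an explicit family of guaranteed incidences coming from the algebraic structure of $F$). First I would set $\P \coloneqq F(\A,\B) \times F(\A,\B)$\,---\,wait, more carefully: since $F(x,y)$ already lives in $\F_q^2$, the natural target for a Szemer\'edi--Trotter-type argument in $\F_q^2\times\F_q^2$ is to pair up two independent copies. Concretely, observe that $x_i^2 - x_i y_i = -\bigl(\tfrac{y_i - x_i}{1}\bigr)\cdot$(something linear in the natural parametrization); the key point is that fixing $x=(x_1,x_2)\in\A$, the map $y\mapsto F(x,y)$ traces out, in each coordinate, the line $Y = -x_i X + x_i^2$ through the variable point determined by $y_i$. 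So I would take
\[
\L \coloneqq \bigl\{\bigl(\{Y = -a_1 X + a_1^2\},\ \{Y = -a_1' X + (a_1')^2\}\bigr) \colon (a_1,a_1')\in\A\times\A\bigr\}
\]
after a suitable relabelling that interleaves the two copies of $\A$ appearing in $\P = F(\A,\B)\times F(\A,\B)$ — the same bookkeeping trick as in Theorem~\ref{thm:multi-sum-product}, where one copy of $\A$ indexes slopes and one indexes intercept-shifts. These are non-vertical line-pairs, so Theorem~\ref{AtimesB,AL_small} applies.

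Next I would establish the lower bound. For each $a = (a_1, a_1')\in\A\times\A$ and each $b=(b_1,b_1')\in\B\times\B$, the point
\[
\bigl(\, b_1,\ F_1(a_1,b_1),\ b_1',\ F_1(a_1',b_1')\,\bigr)\in \F_q^2\times\F_q^2
\]
(with $F_1(x_1,y_1) = x_1^2 - x_1 y_1$) lies on the line-pair indexed by $a$, and it lies in $\P$ by construction. Because distinct $(a,b)$ can collide only when the corresponding output points coincide, a counting argument gives $|\A|^2|\B|^{?} \le I(\P,\L)$; I need to be careful about how many times a given incidence is produced, but in the worst case each line-pair in $\L$ contributes at least $|\B|$ incidences (one for each $(b_1,b_1')$, but these range over $|\B|^2$ values while $|\P|\le|F(\A,\B)|^2$), so $|\A|^2\cdot(\text{something})\le I(\P,\L)$. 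The clean statement should read $|\A|^2|\B|^2 \lesssim |F(\A,\B)|\cdot I(\P,\L)$ or, after matching with the upper bound $|\P|=|F(\A,\B)|^2$, $|\L|=|\A|^2$, and the hypothesis $|\A|^2|\B|\lesssim q^{7/2}$, we get from Theorem~\ref{AtimesB,AL_small} that $I(\P,\L)\lesssim q^{3/4}|F(\A,\B)|\,|\A|$. Combining with the lower bound yields $|F(\A,\B)| \gtrsim q^{-3/4}|\A||\B|^{?}$, and squaring/rearranging as in the sum-product proof gives $|F(\A,\B)|\gtrsim q^{-3/8}|\A||\B|$.

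The main obstacle I anticipate is the combinatorial bookkeeping in the lower bound: getting the exponents of $|\A|$, $|\B|$, and $|F(\A,\B)|$ to line up so that after applying Theorem~\ref{AtimesB,AL_small} one lands exactly on $q^{-3/8}|\A||\B|$. This requires choosing the point set and line-pair set so that $|\P|$ and $|\L|$ are genuinely of the stated sizes (in particular $|\P| = |F(\A,\B)|^2$, not $|F(\A,\B)|$), and so that the "each line contributes many incidences" count is tight up to constants. One also needs to verify the hypothesis $|\A||\L|\lesssim q^{7/2}$ of the second (stronger) conclusion of Theorem~\ref{AtimesB,AL_small}, i.e.\ that $|\A|^3 \lesssim q^{7/2}$ — this follows from $|\A|^2|\B|\lesssim q^{7/2}$ only if $|\B|\gtrsim|\A|$, so I may instead need to invoke Theorem~\ref{AtimesB,AL_small} with the roles arranged so that the smaller of $|\A|,|\B|$ plays the role of $\A$ there, and check $|\A|^2|\L_{\mathrm{there}}| \lesssim q^{7/2}$ matches the given hypothesis. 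Modulo this index-matching, the argument is a direct transcription of the sum-product proof, with $x^2 - xy$ playing the role that the bilinear form $a_1(X - a_2)$ played there.
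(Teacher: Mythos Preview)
Your proposal has a genuine gap, and it is precisely the ``bookkeeping'' you flag as an obstacle---it is not a matter of index-matching but of the construction itself. You take $\P = F(\A,\B)\times F(\A,\B)$ and then claim that for each $(a,a')\in\A\times\A$ and $(b,b')\in\B\times\B$ the point $\bigl(b_1,\,F_1(a_1,b_1),\,b'_1,\,F_1(a'_1,b'_1)\bigr)$ lies in $\P$. But $(b_1,\,F_1(a_1,b_1)) = (b_1,\,a_1^2-a_1b_1)$ is \emph{not} an element of $F(\A,\B)$, whose elements have the form $(c_1^2-c_1d_1,\,c_2^2-c_2d_2)$. In the sum-product proof the analogous step worked because the point set was $(\A+\A)\times(\A\otimes\A)$, a Cartesian product of \emph{two different} image sets, and the guaranteed point $(C+a_2,\,a_1C)$ landed in each factor separately. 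Here $F$ is a single function into $\F_q^2$, so there is no natural Cartesian splitting that makes your lower-bound points fall into $\P$. Without that, the entire lower bound on $I(\P,\L)$ collapses. A secondary issue: with $|\P|=|F(\A,\B)|^2$ and $|\L|=|\A|^2$, the hypothesis of the second conclusion of Theorem~\ref{AtimesB,AL_small} would read $|F(\A,\B)|\,|\A|^2\lesssim q^{7/2}$, which is not the stated hypothesis $|\A|^2|\B|\lesssim q^{7/2}$.

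The paper's argument avoids this by \emph{not} putting $F(\A,\B)$ into the point set at all. Instead it uses the energy method: apply Lemma~\ref{lemma7-SdZ} with $\varphi=F$ on $\X=\A\times\B$ and $\Y=F(\A,\B)$ to get $|\A||\B|\le |F(\A,\B)|^{1/2}|\E|^{1/2}$, where $\E=\{(a,b,a',b'):F(a,b)=F(a',b')\}$. The system $a_i^2-a_ib_i=(a'_i)^2-a'_ib'_i$ is then read as an incidence between the point $(b,b')\in\B\times\B$ and the line-pair $\ell_{a,a'}=\bigl(\{a_1^2-a_1X=(a'_1)^2-a'_1Y\},\{a_2^2-a_2X=(a'_2)^2-a'_2Y\}\bigr)$. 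Now the point set is $\P=\B\times\B$, the line-pair set has size $\lesssim|\A|^2$, and the hypothesis of Theorem~\ref{AtimesB,AL_small} becomes exactly $|\B|\cdot|\A|^2\lesssim q^{7/2}$. This gives $|\E|\lesssim q^{3/4}|\A||\B|$, and substituting back yields the claimed bound. The moral: when the output is a single vector-valued function rather than a pair of independent operations, pass to the collision/energy count first and only then convert to incidences.
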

\begin{proof}[Proof of Theorem \ref{multi-vector-valued}]
First, let us define the following set
\[
\E \coloneqq \{(a, b, a', b') \in \A \times \B \times \A \times \B \colon F(a, b) = F(a', b')\}.
\]
Then, a direct application of Lemma \ref{lemma7-SdZ} gives
\begin{equation}\label{proof:mvv,eq1}
|\A||\B| \le |F(\A, \B)|^{\frac{1}{2}} |\E|^{\frac{1}{2}}.
\end{equation}
On the other hand, we can bound $|\E|$ using Theorem \ref{AtimesB,AL_small}, by considering a solution of the system 
\[
\begin{cases}
a_1^2 - a_1b_1 = {a'_1}^2 - a'_1b'_1\\
b_2^2 - a_2b_2 = {b'_2}^2 - a'_2b'_2
\end{cases}
\]
as an incidence between the point $(b, b')$ and the line-pair
\[
\ell_{a, a'} \coloneqq (\{a_1^2 - a_1X = {a'_1}^2 - a'_1Y\}, \{a_2^2 - a_2X = {a'_2}^2 - a'_2Y\}).
\]
Consider the point set and the line-pair set
\[
\P \coloneqq \B \times \B, 
\quad
\L \coloneqq \{\ell_{a,a'} \colon a_1 \neq \pm a'_1 ~\mbox{or}~ a_2 \neq \pm a'_2\}.
\]
By the observation made earlier, it is straightforward to check that $|\E| \le 2|I(\P, \L)|$. By applying Theorem \ref{AtimesB,AL_small} with the two sets above, we obtain that 
\begin{equation}\label{proof:mvv,eq2}
|\E| \le 2|I(\P, \L)| \lesssim q^{\frac{3}{2}} |\A||\B|,
\end{equation}
provided that $|\A|^2|\B| \lesssim q^{5}$. Combining \ref{proof:mvv,eq1} and \ref{proof:mvv,eq2} gives the estimate
\[
|F(\A, \B)| \gtrsim q^{-\frac{3}{2}} |\A||\B|,
\]
whenever we have $|\A|^2|\B| \lesssim q^{5}$.
\end{proof}

\pagebreak
\section{Acknowledgements}
We would like to thank to the Vietnam Institute for Advanced Study in Mathematics for the hospitality and for the excellent working conditions, and Thang Pham for suggesting we investigate multi-parameter problems.

\end{document}